\newcommand{\inv}[1]{#1^{-1}}
\renewcommand{\d}{\,\mathrm d} % unsure if I prefer this or just regular $d$
\newcommand{\Ith}[2]{{#1}^{\left(#2\right)}}
\newcommand{\ith}[1]{\Ith{#1}i}
\newcommand{\mc}{\mathcal}
\DeclareMathOperator{\Z}{\mathbb Z}
\DeclareMathOperator{\R}{\mathbb R}
\DeclareMathOperator{\C}{\mathbb C}
\DeclareMathOperator{\N}{\mathbb N}
\DeclareMathOperator{\eps}{\varepsilon}
\renewcommand{\tau}{\uptau}
\newcommand{\parens}[1]{\left(#1\right)}
\newcommand{\brackets}[1]{\left\{#1\right\}}
\newcommand{\sqbracks}[1]{\left[#1\right]}
\newcommand*{\rom}[1]{\textup{\uppercase\expandafter{\romannumeral#1}}}
\newcommand{\tbf}{\textbf}
\newcommand{\sm}{\setminus}
\renewcommand{\ast}[1]{#1^*}
\renewcommand{\ast}[1]{#1^*}
\renewcommand{\Re}{\mathrm{Re}\,}
\def\renewtheorem#1{%
	\expandafter\let\csname#1\endcsname\relax
	\expandafter\let\csname c@#1\endcsname\relax
	\gdef\renewtheorem@envname{#1}
	\renewtheorem@secpar
}
\def\renewtheorem@secpar{\@ifnextchar[{\renewtheorem@numberedlike}{\renewtheorem@nonumberedlike}}
\def\renewtheorem@numberedlike[#1]#2{\newtheorem{\renewtheorem@envname}[#1]{#2}}
\def\renewtheorem@nonumberedlike#1{  
	\def\renewtheorem@caption{#1}
	\edef\renewtheorem@nowithin{\noexpand\newtheorem{\renewtheorem@envname}{\renewtheorem@caption}}
	\renewtheorem@thirdpar
}
\def\renewtheorem@thirdpar{\@ifnextchar[{\renewtheorem@within}{\renewtheorem@nowithin}}
\def\renewtheorem@within[#1]{\renewtheorem@nowithin[#1]}
\hfill\rlap{%
		\bgroup\color{#4}%
		\hskip-\dimexpr#1-#3\relax\rule{#1}{#2}%
		\hskip-\dimexpr#1/#5\relax\rule[-\dimexpr#1-\dimexpr#1/#5\relax]{#2}{#1}%
		\egroup
	}%
\bgroup\color{#4}%
\providecommand{\customgenericname}{}
\newcommand{\newcustomtheorem}[2]{%
	\newenvironment{#1}[1]
	{%
		\renewcommand\customgenericname{#2}%
		\renewcommand\theinnercustomgeneric{##1}%
		\innercustomgeneric
	}
	{\endinnercustomgeneric}
}
\theoremstyle{plain}
\newtheorem{thm}{Theorem}
\newtheorem{lemma}[thm]{Lemma}
\newtheorem{cor}[thm]{Corollary}
\newtheorem{conject}[thm]{Conjecture}
\newtheorem{prop}[thm]{Proposition}
\theoremstyle{definition}
\newtheorem{defn}[thm]{Definition}
\newtheorem{notn}[thm]{Notation}
\theoremstyle{remark}
\newtheorem{rem}[thm]{Remark}
\newcommand{\xsspacing}[0]{\hspace*{10pt}}
\newcommand{\sspacing}[0]{\hspace*{25pt}}
\newcommand{\Text}[2]{\text{#2 #1 #2}}
\newcommand{\thought}[1]{\todo[color=green!50]{#1}}
\renewcommand{\S}{\mathbb S}
\theoremstyle{plain}
\theoremstyle{definition}
\theoremstyle{remark}
\title{On Gaps in the Closures of Images of Divisor Functions}
\author{Niven Achenjang\footnote{Stanford University, \textit{niven@stanford.edu}}\, and Aaron Berger\footnote{MIT, \textit{bergera@mit.edu}}}
\date{}
\begin{document}

\maketitle
\begin{abstract}
	Given a complex number $c$, define the divisor function $\sigma_c:\N\to\C$ by $\sigma_c(n)=\sum_{d\mid n}d^c$. In this paper, we look at $\overline{\sigma_{-r}(\N)}$, the topological closures of the image of $\sigma_{-r}$, when $r>1$. We exhibit new lower bounds on the number of connected components of $\overline{\sigma_{-r}(\N)}$, bringing this bound from linear in $r$ to exponential. Finally, we discuss the general structure of gaps of $\overline{\sigma_{-r}(\N)}$ in order to work towards a possible monotonicity result.
\end{abstract}

\section{Introduction}
\indent
Our main objects of study in this paper will be divisor functions. Given a complex number $c\in\C$, the divisor function $\sigma_c:\N\to\C$ is given by
$$\sigma_c(n)=\sum_{d\mid n}d^c,$$
where $\N=\{1,2,\dots\}$ is the set of positive integers. Laatsch studied the set $\sigma_{-1}(\N)$ in 1986 \cite{laat}, showing that it is dense in $[1,\infty)$. Motivated by this, Defant \cite{coin1} began the study of topological properties of $\overline{\sigma_{-r}(\N)}$ for a real parameter $r>1$. In particular, he showed that $\overline{\sigma_{-r}(\N)}$ is connected for $r$ in the range $(0,\eta]$ where $\eta\approx1.88779$ is a constant, now called the Defantstant \cite{zubr}, satisfying
$$\frac{2^\eta}{2^\eta-1}\frac{3^\eta+1}{3^\eta-1}=\zeta(\eta).$$
\indent
Sanna \cite{sann} gave an algorithm for computing $\overline{\sigma_{-r}(\N)}$ for a given $r$, and used this algorithm to show that $\overline{\sigma_{-r}(\N)}$ always has finitely many connected components. Zubrilina \cite{zubr} studied the number of connected components of $\overline{\sigma_{-r}(\N)}$ in more detail. In particular, letting $C_r$ denote the number of connected components of $\overline{\sigma_{-r}(\N)}$, she showed that 
$$\pi(r)+1\le C_r\le\frac12\exp\sqbracks{\frac12\frac{r^{20/9}}{(\log r)^{29/9}}\parens{1+\frac{\log\log r}{\log r-\log\log r}+\frac{\mc O(1)}{\log r}}}$$
where $\pi(r)$ is the number of primes at most $r$. In addition, she showed that $C_r$ does not take on all finite values; in particular, she showed that $C_r\neq4$ for all real $r$. Such numbers are now called \tbf{Zubrilina numbers} \cite{coin3}.

\indent
Let $p_m$ denote the $m$th prime number. Work in this field is reliant upon the notion of \tbf{$r$-mighty primes}, which are primes $p_m$ such that
$$1+\frac1{p_m^r}>\prod_{t=m+1}^\infty\frac1{1-p_t^{-r}}.$$
Furthermore, for understanding the behavior of $C_r$, it has proven useful to study the gaps of $\overline{\sigma_{-r}(\N)}$, where by a gap we mean a bounded connected component of $\R\sm\overline{\sigma_{-r}(\N)}$. In this paper, we extend Zubrilina's work by showing that 6 is also a Zubrilina number and by improving her lower bound for $C_r$ in both the asymptotic and small-$r$ cases. At the end, we will look at the general structure of gaps of $\overline{\sigma_{-r}(\N)}$ before finishing with some open problems. 

\iffalse
Our asymptotic bound for $C_r$ is as follows:
\begin{thm}
    $C_r \ge 2^{\pi(r - 1)}$.
\end{thm}
\fi

\section{The Effect of Taking the Closure}
In this section, we will provide an alternate description of $\overline{\sigma_{-r}(\N)}$ that is simpler to work with later on because it avoids the need to take a closure. This new description results from replacing the domain of $\sigma_{-r}$ with the larger set of ``supernatural'' numbers, which allow for infinitely many prime factors.
\begin{defn}[\cite{Steinitz1910}]
	A \tbf{Steinitz} (or \tbf{supernatural}) \tbf{number} is a formal product
	$$n=\prod_{p\text{ prime}}p^{\alpha_p}$$
	where $\alpha_p\in\Z_{\ge0}\cup\,\{\infty\}$ for all primes $p$. We extend the usual $p$-adic valuation to such numbers by setting $v_p(n)=\alpha_p$. Finally, let $\S$ denote the set of all Steinitz numbers.\footnote{It seems there is no standard notation in the literature for denoting this set.}
\end{defn}
\begin{rem}
	For $c\in\C$ with $\Re(c)<-1$, we can naturally extend $\sigma_c$ to a function $\S\to\C$ by setting
	$$\sigma_c(p^\infty)=\lim_{n\to\infty}\sigma_c(p^n)=\frac1{1-p^c}.$$
	We still require $\sigma_c$ to be multiplicative on $\S$, so, for example, we have $\sigma_c(2^\infty\cdot3^2)=\sigma_c(2^\infty)\sigma_c(3^2)=\parens{\frac1{1-2^c}}\parens{\frac{1-3^{3c}}{1-3^c}}$.
\end{rem}
The utility of introducing these Steinitz numbers is demonstrated in the following proposition.
\begin{prop}\label{stein}
	Fix a complex number $c$ with $\Re(c)<-1$. Then $$\overline{\sigma_c(\N)} = \sigma_c(\S).$$
\end{prop}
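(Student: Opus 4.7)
The plan is to realize $\S$ as a compact topological space on which $\sigma_c$ is continuous; then both inclusions follow almost formally. For each prime $p$ give $\Z_{\ge 0}\cup\{\infty\}$ the topology of the one-point compactification of the discrete space $\Z_{\ge0}$ (so a sequence $\alpha_k$ converges to $\infty$ iff $\alpha_k\to\infty$ in the usual sense, and to a finite value iff it is eventually constant at that value). Equip $\S=\prod_{p}(\Z_{\ge0}\cup\{\infty\})$ with the product topology; by Tychonoff, $\S$ is compact. In this topology, $n^{(k)}\to n$ in $\S$ iff $v_p(n^{(k)})\to v_p(n)$ for every prime $p$.

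Next I would show $\sigma_c:\S\to\C$ is continuous. Using multiplicativity, write
\[
\sigma_c(n)=\prod_{p\text{ prime}}\sigma_c\!\left(p^{v_p(n)}\right),
\]
where each factor is bounded in absolute value by $1/(1-p^{\Re c})$. Because $\Re(c)<-1$ gives $\sum_p p^{\Re c}<\infty$, a standard estimate shows that for every $\eps>0$ there is an $N$ with
\[
\bigl|\textstyle\prod_{p>N}\sigma_c(p^{v_p(n)})-1\bigr|<\eps \qquad\text{uniformly in }n\in\S.
\]
Thus the infinite product converges uniformly on $\S$, and since each factor $\alpha\mapsto\sigma_c(p^\alpha)$ is continuous on $\Z_{\ge0}\cup\{\infty\}$ by the very definition $\sigma_c(p^\infty)=\lim_{n\to\infty}\sigma_c(p^n)$, the whole product is continuous.

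From this the proposition falls out in two steps. For $\overline{\sigma_c(\N)}\subseteq\sigma_c(\S)$: the inclusion $\N\hookrightarrow\S$ gives $\sigma_c(\N)\subseteq\sigma_c(\S)$, and $\sigma_c(\S)$ is compact (continuous image of a compact space), hence closed in $\C$. For the reverse, given $n=\prod_p p^{\alpha_p}\in\S$, define the truncations
\[
n_k=\prod_{p\le p_k}p^{\min(\alpha_p,\,k)}\in\N.
\]
At each prime $p$, $v_p(n_k)\to\alpha_p$ (eventually constant when $\alpha_p<\infty$, tending to $\infty$ otherwise), so $n_k\to n$ in $\S$ and continuity yields $\sigma_c(n_k)\to\sigma_c(n)$, placing $\sigma_c(n)\in\overline{\sigma_c(\N)}$.

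The one genuinely nontrivial step is the uniform convergence of the Euler-type product that proves continuity of $\sigma_c$ on $\S$; this is where the hypothesis $\Re(c)<-1$ is used in an essential way, and the rest of the argument is bookkeeping around compactness. If uniform convergence proved awkward to write cleanly, an alternative is to skip the topological setup entirely and instead extract a Steinitz limit from an arbitrary convergent sequence $\sigma_c(m_j)\to L$ by a diagonal argument applied to $(v_p(m_j))_j$ prime by prime—but the compactness-plus-continuity route is shorter and more conceptual.
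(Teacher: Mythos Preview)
Your proof is correct but takes a genuinely different route from the paper. Both arguments handle the inclusion $\sigma_c(\S)\subseteq\overline{\sigma_c(\N)}$ the same way, by truncating a Steinitz number to obtain natural-number approximants. The difference is in the reverse inclusion. The paper argues directly: given a convergent sequence $\sigma_c(a_i)\to L$, it passes to subsequences prime-by-prime so that each $v_{p_m}(a_i)$ becomes monotone, sets $\alpha_m=\lim_i v_{p_m}(a_i)\in\Z_{\ge0}\cup\{\infty\}$, and declares $a=\prod p_m^{\alpha_m}$ to satisfy $\sigma_c(a)=L$; this is exactly the diagonal extraction you mention as a fallback at the end. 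You instead topologize $\S$ as a product of one-point compactifications, prove $\sigma_c$ is continuous via uniform convergence of the Euler product, and deduce that $\sigma_c(\S)$ is compact and hence closed. Your approach is more conceptual and isolates the role of the hypothesis $\Re(c)<-1$ in a single reusable continuity statement; the paper's hands-on extraction avoids Tychonoff and the uniform-convergence bookkeeping, but the final step ``by construction $\sigma_c(a)=L$'' tacitly relies on the same tail estimate you make explicit.
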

\begin{proof}
	$(\supseteq)$ Given $n\in\S$, let $A=\{m\in\N:v_{p_m}(n)<\infty\}$ and let $B=\N\sm A$. For convenience, write $\alpha_m=v_{p_m}(n)$ for all $m\in A$. Then,
	$$\sigma_{c}(n)=\lim_{k\to\infty}\sigma_c\parens{\parens{\prod_{\substack{m\in A\\m\le k}}p_m^{\alpha_m}}\parens{\prod_{\substack{m\in B\\m\le k}}^kp_m^k}}\in\overline{\sigma_{c}(\N)}.$$
	($\subseteq$) Let
	$$a_i=\prod_{m=1}^\infty p_m^{\ith\alpha_m}\in\N$$
	be a sequence of natural numbers such that $\sigma_c(a_i)\to L\in\overline{\sigma_{-r}(\N)}$ as $i\to\infty$. We will inductively construct
	$$a=\prod_{m=1}^\infty p_m^{\alpha_m}\in\S$$
	such that $\sigma_c(a)=L$. First, to find $\alpha_1$, note that there's a monotone increasing subsequence of $\ith\alpha_1$. By replacing $a_i$ with the subsequence of elements corresponding to this subsequence of $\ith\alpha_1$, we can assume that $\ith\alpha_1$ is monotone increasing. Furthermore, after doing this, we still have that $L=\lim\sigma_c(a_i)$ and can set $\alpha_1=\lim\ith\alpha_1\in\Z_{\ge0}\cup\,\{\infty\}$. Now, inductively repeat this process, replacing $\ith\alpha_m$ with a monotone increasing subsequence and setting $\alpha_m=\lim{\ith\alpha_m}$. By construction, we get
	\begin{align*}
	\sigma_{-r}\parens{\prod_{m=1}^\infty p_m^{\alpha_m}}=L,
	\end{align*}
	as desired.
\end{proof}
Because of the above theorem, for the remainder of the paper, we can work directly with $\sigma_{-r}(\S)$ instead of $\overline{\sigma_{-r}(\N)}$.
\begin{rem}
	We can restate the definition of $r$-mighty primes by saying that $p_m$ is $r$-mighty if
	$$\sigma_{-r}(p_m)>\sigma_{-r}\parens{\prod_{t=m+1}^\infty p_t^\infty}.$$
\end{rem}

\section{A New Class of Gaps of $\sigma_{-r}(\S)$}\label{gap sec}
In this section, we will exhibit a new class of gaps of $\sigma_{-r}(\S)$. We first need to introduce some notation.
\begin{notn} 
    Let
    \begin{itemize}
        \item $N_r$ denote the number of $r$-mighty primes.
        \item $r_p=\inf\brackets{s>1:p\text{ is $s$-mighty}}$.
        \item
        $$u_m(r)=\sigma_{-r}\parens{\prod_{t=m+1}^\infty p_t^\infty}=\prod_{t=m+1}^\infty\frac1{1-p_t^{-r}}.$$
    \end{itemize}
	We will often not make the $r$ explicit, writing $u_m$ instead of $u_m(r)$.
\end{notn}
Zubrilina \cite{zubr} bounded $C_r$ below by showing that $\parens{u_m,\sigma_{-r}(p_m)}$ is a gap of $\sigma_{-r}(\S)$ when $p_m$ is $r$-mighty, allowing her to conclude that $C_r\ge1+N_r$. We will similarly show that for $r$-mighty primes $p_m,q$ with $p_m>q^2$,
$$\parens{\sigma_{-r}(q)u_m,\sigma_{-r}(qp_m))}$$
is a gap of $\sigma_{-r}(\S)$. Zubrilina \cite{zubr} showed that $r_p$ is always finite and that $p$ is $r$-mighty if and only if $r>r_p$. Furthermore, she showed that
\begin{equation}\label{mightyorder}
	r_3<r_2<r_5<r_7<r_p\text{ for primes }p>7.
\end{equation}
Using a computer, one can verify that
$$\begin{matrix}
1.8 &<& r_3 &<& 1.9\\
1.9 &<& r_2 &<& 2\\
2.2 &<& r_5 &<& 2.3\\
2.4 &<& r_7 &<& 2.5
\end{matrix}.$$
Now, we establish two lemmas used in the proof of this section's main theorem.
\begin{lemma}\label{divqfull}
	Let $q$ be an $r$-mighty prime, let $p_m>q^2$ also be prime. Fix some $r\ge2$ and some $n\in\S$ such that
	$$\sigma_{-r}(q) < \sigma_{-r}(n) < \sigma_{-r}(qp_m).$$
	Then, $q\mid n$, and $q$ is the smallest prime dividing $n$.
\end{lemma}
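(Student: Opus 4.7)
The plan is to proceed by contradiction, showing that the smallest prime factor of $n$ must equal $q$. Partition by the smallest prime factor $p_*$ of $n$ into three mutually exclusive cases: (a) $p_* < q$; (b) $p_* > q$ (equivalently, $q \nmid n$ and every prime factor of $n$ exceeds $q$); or (c) $p_* = q$ (equivalently, $q \mid n$ with no smaller prime dividing $n$). Case (c) is exactly the desired conclusion, so it suffices to rule out (a) and (b); the edge case $n = 1$ (with no prime factors) falls into (b) vacuously and is handled alongside it.

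Case (b) is almost immediate from $r$-mightiness. If every prime factor of $n$ exceeds $q = p_j$, multiplicativity of $\sigma_{-r}$ yields $\sigma_{-r}(n) \leq u_j$, while the $r$-mightiness of $q$ (via the remark preceding the lemma) gives $u_j < \sigma_{-r}(q)$, contradicting the hypothesis $\sigma_{-r}(n) > \sigma_{-r}(q)$.

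Case (a) is the more substantive one. Let $p = p_*$, so $p < q$ is prime. Multiplicativity of $\sigma_{-r}$ together with $\sigma_{-r}(p^\alpha) \geq 1 + p^{-r}$ for $\alpha \geq 1$ gives $\sigma_{-r}(n) \geq 1 + p^{-r}$; I aim to show this already exceeds $\sigma_{-r}(qp_m) = (1+q^{-r})(1+p_m^{-r})$, contradicting the upper bound. The hypothesis $p_m > q^2$ forces $p_m^{-r} < q^{-2r}$, which reduces the target inequality to $p^{-r} - q^{-r} \geq 2q^{-2r}$. Applying Bernoulli's inequality to $(q/p)^r \geq (1 + 1/(q-1))^r$ yields $p^{-r} - q^{-r} \geq rq^{-r}/(q-1)$, so the claim further reduces to the elementary bound $rq^r \geq 2(q-1)$, which holds uniformly for all $r \geq 2$ and all primes $q \geq 3$. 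Case (a) automatically forces $q \geq 3$ (there is no prime below $q = 2$), so this range is all that is needed.

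The conceptual content of the argument is precisely the three-way case split, in which each alternative to (c) is ruled out by exactly one of the two hypotheses: $r$-mightiness of $q$ neutralizes case (b), while the inequality $p_m > q^2$ neutralizes case (a). The main technical step is the elementary bound $rq^r \geq 2(q-1)$; a mild subtlety is that $\sigma_{-r}(n)$ and $\sigma_{-r}(qp_m)$ both tend to $1$ as $r \to \infty$, so one must genuinely exploit the gap between $p < q$ and $p_m > q^2$ rather than rely on a crude magnitude comparison.
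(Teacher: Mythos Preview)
Your proof is correct and follows essentially the same route as the paper: the same three-way split on the smallest prime factor of $n$, the same use of $r$-mightiness to dispose of case (b), and the same Bernoulli-based estimate in case (a). The only cosmetic difference is in the algebra of case (a): the paper bounds $(q-1)^{-r}=q^{-r}(1-1/q)^{-r}\ge q^{-r}(1+r/q)$ and reduces to $rq^{-r-1}>q^{-2r}+q^{-3r}$, whereas you bound $(q/p)^r\ge(1+1/(q-1))^r\ge1+r/(q-1)$ and reduce to $rq^r\ge 2(q-1)$; both are equivalent in strength and spirit.
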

\begin{proof}
	It's easy to see that some prime $p_k\le q$ must divide $n$ since otherwise we would have (where $q=p_{m(q)}$)
	$$\sigma_{-r}(n)\le\sigma_{-r}\parens{\prod_{k=m(q)+1}^\infty p_k^\infty}=u_{m(q)}<\sigma_{-r}(q).$$
	Suppose that $p_k<q$. We will show that
	$$\sigma_{-r}(p_k)=1+p_k^{-r}>1+q^{-r}+p_m^{-r}+(qp_m)^{-r}=\sigma_{-r}(qp_m).$$
	Since $p_m>q^2$ and $p_k\le(q-1)$, it suffices to show that $(q-1)^{-r}>q^{-r}+q^{-2r}+q^{-3r}$. By Bernoulli's inequality,
	$$(q-1)^{-r}=q^{-r}(1-\inv q)^{-r}\ge q^{-r}(1+r\inv q)=q^{-r}+rq^{-r-1},$$
	so it suffices to show that $q^{-r}+rq^{-r-1}>q^{-r}+q^{-2r}+q^{-3r}$. Simplifying this inequality yields $rq^{-r-1}>q^{-2r}+q^{-3r}$, which holds since $r\ge2$. Thus, we have shown that
	\begin{align*}
	    \sigma_{-r}(n)\ge\sigma_{-r}(p_k)>\sigma_{-r}(qp_m).&\qedhere
	\end{align*}
\end{proof}
\begin{cor}\label{divq}
	Let $p_m,q$ be $r$-mighty primes such that $p_m>q^2$. Pick some $n\in\N$ such that $\sigma_{-r}(n)\in \parens{\sigma_{-r}(q)u_m,\sigma_{-r}(qp_m)}$. Then, $q\mid n$.
\end{cor}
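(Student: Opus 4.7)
The plan is to deduce Corollary \ref{divq} as an immediate consequence of Lemma \ref{divqfull}. Given $n \in \N$ with $\sigma_{-r}(n) \in \parens{\sigma_{-r}(q) u_m, \sigma_{-r}(qp_m)}$, I would first observe that
$$u_m = \prod_{t=m+1}^\infty \frac{1}{1-p_t^{-r}} > 1,$$
since each factor in the product exceeds $1$. Multiplying through by the positive quantity $\sigma_{-r}(q)$ yields $\sigma_{-r}(q) < \sigma_{-r}(q) u_m$, so the hypothesis on $\sigma_{-r}(n)$ implies $\sigma_{-r}(q) < \sigma_{-r}(n) < \sigma_{-r}(qp_m)$, which is precisely the hypothesis of Lemma \ref{divqfull} (noting that $\N \subset \S$).

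The only subtlety is the lemma's additional requirement that $r \geq 2$, which is not made explicit in the corollary's statement. However, this is forced by the existing hypotheses: since $p_m > q^2 \geq 4$, we must have $p_m \geq 5$, and the $r$-mightiness of $p_m$ together with the ordering (\ref{mightyorder}) and the tabulated inequality $r_5 > 2.2$ gives $r > r_{p_m} \geq r_5 > 2$. Applying Lemma \ref{divqfull} then yields $q \mid n$ as claimed (and in fact $q$ is the smallest prime divisor of $n$, though the corollary only records the weaker conclusion).

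There is no substantive obstacle here, since the corollary is essentially a restatement of the lemma tailored to $n \in \N$ and the sharper interval $\parens{\sigma_{-r}(q) u_m, \sigma_{-r}(qp_m)}$. This sharpened lower endpoint is the point of the restatement: it is exactly what will be needed when identifying $\parens{\sigma_{-r}(q) u_m, \sigma_{-r}(qp_m)}$ as a new gap of $\sigma_{-r}(\S)$ in the theorem to follow, where one must rule out not just natural-number hits but also limits involving arbitrarily many small prime powers.
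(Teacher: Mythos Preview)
Your proposal is correct and follows essentially the same approach as the paper: you reduce to Lemma~\ref{divqfull} by first noting $u_m>1$ to widen the interval, and then derive $r>2$ from $p_m\ge5$ via (\ref{mightyorder}) and the tabulated bound $r_5>2.2$. The paper's proof is identical in substance, only slightly terser in leaving the $u_m>1$ step implicit.
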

\begin{proof}
	Because $p_m$ is $r$-mighty, it follows from Zubrilina's result that $r>r_{p_m}$. We know that $p_m\ge q^2\ge2^2$. By (\ref{mightyorder}), this means that $r_{p_m}\ge r_5$. Using Zubrilina's result once more, we see that $5$ is $r$-mighty, so $r>r_5>2$. Thus, we can apply Lemma \ref{divqfull}
\end{proof}
\begin{lemma}
	Fix $r>2.2$ and let $p$ and $q$ be primes such that $p>q^2$. Then $\sigma_{-r}(qp)<\sigma_{-r}(q^2)$.
\end{lemma}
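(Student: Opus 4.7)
The plan is to reduce the inequality to an elementary algebraic estimate. Expanding both sides via multiplicativity gives $\sigma_{-r}(qp) = 1 + q^{-r} + p^{-r} + (qp)^{-r}$ and $\sigma_{-r}(q^2) = 1 + q^{-r} + q^{-2r}$, so after cancelling the common terms the desired inequality becomes $p^{-r}(1 + q^{-r}) < q^{-2r}$, which I would rearrange into the equivalent form $p^r > q^{2r} + q^r$.

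Since $p$ is a prime integer strictly greater than $q^2$, it must satisfy $p \geq q^2 + 1$, so it suffices to prove the stronger inequality $(q^2+1)^r > q^{2r} + q^r$. The natural tool is Bernoulli's inequality applied to $(1 + q^{-2})^r$: factoring $q^{2r}$ out of the left side yields $(q^2+1)^r = q^{2r}(1+q^{-2})^r \geq q^{2r}(1+rq^{-2}) = q^{2r} + rq^{2r-2}$, which is valid since $r > 1$. This reduces the problem to the inequality $rq^{2r-2} > q^r$, or equivalently $r > q^{2-r}$. For $r > 2.2$ and $q \geq 2$, this follows immediately from the bound $q^{2-r} \leq 2^{2-r} < 2^{-0.2} < 1 < r$.

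I do not anticipate any serious obstacle in this argument, as everything after the reduction is a one-line estimate. The one delicate point worth flagging is that Bernoulli yields only a linear approximation of $(1+q^{-2})^r$, so the proof requires enough margin between $p^r$ and $q^{2r} + q^r$ to absorb the loss; the integer gap $p \geq q^2 + 1$ (rather than merely $p > q^2$) is exactly what supplies this margin, and it is precisely where the primality of $p$ — ensuring $p$ is integer-valued — enters the argument.
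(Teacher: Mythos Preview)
Your proof is correct. Both you and the paper begin identically: expand, cancel to $p^{-r}+(qp)^{-r}<q^{-2r}$, and exploit $p\ge q^2+1$. The divergence is in the bounding step. The paper stays with negative exponents and uses the chord inequality $(1+x)^{-r}\le 1+(2^{-r}-1)x$ on $[0,1]$ to control $(q^2+1)^{-r}$ and $(q^3+q)^{-r}$ separately, which then requires several more lines of algebra and a final check that $q^{r-2}+q^{-2}>4/3$. You instead invert to the equivalent $p^r>q^{2r}+q^r$ and apply Bernoulli to $(1+q^{-2})^r$, reducing everything to the one-line verification $r>q^{2-r}$. Your route is shorter and avoids the somewhat ad hoc chord bound; the paper's route has the minor advantage of never leaving the ``negative exponent'' normalization used elsewhere in the section, but at the cost of more bookkeeping.
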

\begin{proof}
    We seek to show that $1+q^{-r}+p^{-r}+(qp)^{-r} < 1+q^{-r}+q^{-2r}$ which simplifies to $p^{-r}+(qp)^{-r}<q^{-2r}$. Since $p\ge q^2+1$, it suffices to show that 
    $$(q^2+1)^{-r}+(q^3+q)^{-r}<q^{2r}.$$
    It is straightforward to verify that $(1+x)^r\le1+(2^r-1)x$ when $x\in[0,1]$ and $r\not\in(0,1)$. Using this, we see that
    $$(q^2+1)^{-r}+(q^3+q)^{-r}\le q^{-2r}+q^{-2r-2}(2^{-r}-1)+q^{-3r}+q^{-3r-2}(2^{-r}-1)$$
    so it suffices to show that $q^{-2r}+q^{-2r-2}(2^{-r}-1)+q^{-3r}+q^{-3r-2}(2^{-r}-1)<q^{-2r}$. Simplifying this inequality yields $(2^{-r}-1)(q^{-2r-2}+q^{-3r-2})<-q^{-3r}$. Since $r>2$, we have $2^{-r}<1/4$, so
    we only need to show that
    $$q^{-2r-2}+q^{-3r-2}>\frac43q^{-3r}.$$
    Dividing by $q^{-3r}$ gives $q^{r-2}+q^{-2}>\frac43$ which holds since $q\ge2$, $r\ge2.2$, and $2^{0.2}+2^{-2}>1.39>4/3$.
\end{proof}
\begin{cor}\label{notq2}
    Let $p$ and $q$ be $r$-mighty primes such that $p>q^2$. Then $\sigma_{-r}(qp)<\sigma_{-r}(q^2)$.
\end{cor}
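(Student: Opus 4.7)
The plan is to reduce the corollary directly to the preceding lemma by verifying that the hypothesis $r>2.2$ is forced by the assumption that $p$ and $q$ are both $r$-mighty. This mirrors the structure of the proof of Corollary \ref{divq} above, so I expect the argument to be essentially a two-line citation chase rather than any new calculation.

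First I would observe that since $p$ and $q$ are primes with $p>q^2$ and $q\ge 2$, we have $p>4$, hence $p\ge 5$. Next, since $p$ is $r$-mighty, Zubrilina's characterization of $r$-mighty primes gives $r>r_p$. Appealing to the ordering \eqref{mightyorder}, namely $r_3<r_2<r_5<r_7<r_p$ for primes $p>7$, we get $r_p\ge r_5$ in all cases $p\ge 5$. Finally, the numerical bound $r_5>2.2$ stated in the excerpt yields $r>2.2$.

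With $r>2.2$ in hand, the hypotheses of the preceding lemma are satisfied (we only used that $p$ and $q$ are primes with $p>q^2$, which was assumed), so the conclusion $\sigma_{-r}(qp)<\sigma_{-r}(q^2)$ follows immediately. The only thing to be a bit careful about is the edge case $q=2$, $p=5$: here $p>q^2=4$ holds, and $p=5$ is $r$-mighty forces $r>r_5>2.2$, so the chain still goes through. I do not foresee any real obstacle; the work has already been done in the lemma, and the corollary just records the observation that $r$-mightiness of $p\ge 5$ automatically supplies the numerical condition $r>2.2$.
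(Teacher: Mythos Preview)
Your proposal is correct and matches the paper's intended argument: the corollary has no written proof in the paper because it is meant to follow immediately from the preceding lemma, with the condition $r>2.2$ supplied exactly as you describe (via $p\ge 5$, $r>r_p\ge r_5>2.2$), mirroring the proof of Corollary~\ref{divq}.
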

We can now prove this section's main result.
\begin{thm}\label{newgap}
	Let $p_m,q$ be $r$-mighty primes with $q^2<p_m$. Then,
	$$G := \parens{\sigma_{-r}(q)u_m,\sigma_{-r}(qp_m)}$$
	is a gap of $\sigma_{-r}(\S)$.
\end{thm}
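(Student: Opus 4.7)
My plan is to establish the two defining properties of a gap: (i) $G \cap \sigma_{-r}(\S) = \emptyset$, and (ii) both endpoints of $G$ lie in $\sigma_{-r}(\S)$ (so that $G$ is actually maximal as a subinterval of the complement). Property (ii) is essentially immediate. The right endpoint $\sigma_{-r}(qp_m)$ lies in $\sigma_{-r}(\N) \subseteq \sigma_{-r}(\S)$, and the left endpoint factors as $\sigma_{-r}(q) u_m = \sigma_{-r}\parens{q \cdot \prod_{t=m+1}^\infty p_t^\infty}$, which lies in $\sigma_{-r}(\S)$ by definition of $u_m$ and multiplicativity.

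Property (i) is the heart of the proof. I suppose for contradiction that some $n \in \S$ has $\sigma_{-r}(n) \in G$. Since $u_m > 1$, this in particular gives $\sigma_{-r}(q) < \sigma_{-r}(n) < \sigma_{-r}(qp_m)$, which is exactly the hypothesis of Lemma \ref{divqfull}. Before invoking that lemma I must verify $r \ge 2$: since $p_m$ is $r$-mighty and $p_m > q^2 \ge 4$, we have $p_m \ge 5$, so (\ref{mightyorder}) gives $r > r_{p_m} \ge r_5 > 2.2$. The lemma then concludes that $q$ is the smallest prime factor of $n$; I write $n = q^a m$ with $\gcd(m, q) = 1$ and $a = v_q(n) \ge 1$, where $m \in \S$ as well.

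The remainder is a case split on $a$. If $a \ge 2$, then multiplicativity gives $\sigma_{-r}(n) \ge \sigma_{-r}(q^a) \ge \sigma_{-r}(q^2)$, and Corollary \ref{notq2} supplies $\sigma_{-r}(q^2) > \sigma_{-r}(qp_m)$, contradicting the upper bound in $G$. If $a = 1$, then $\sigma_{-r}(n) = \sigma_{-r}(q)\sigma_{-r}(m)$; dividing the chain $\sigma_{-r}(q) u_m < \sigma_{-r}(n) < \sigma_{-r}(qp_m)$ through by $\sigma_{-r}(q)$, and using $\sigma_{-r}(qp_m)/\sigma_{-r}(q) = \sigma_{-r}(p_m)$ (since $\gcd(q, p_m) = 1$), yields $\sigma_{-r}(m) \in (u_m, \sigma_{-r}(p_m))$. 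But the interval $(u_m, \sigma_{-r}(p_m))$ is precisely Zubrilina's known gap of $\sigma_{-r}(\S)$, so $m \in \S$ cannot map into it --- contradiction.

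The bulk of the work has already been done in Lemma \ref{divqfull} and Corollary \ref{notq2}; once those are in hand, the proof reduces to a clean dichotomy on $v_q(n)$. The only point requiring a bit of care is confirming the hypothesis $r \ge 2$ of Lemma \ref{divqfull} from the $r$-mightiness of $p_m$ via the ordering in (\ref{mightyorder}), which is where the condition $p_m > q^2$ (rather than just $p_m > q$) enters indirectly by forcing $p_m \ge 5$.
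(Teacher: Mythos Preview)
Your proof is correct and follows essentially the same approach as the paper's: verify the endpoints lie in $\sigma_{-r}(\S)$, use Lemma~\ref{divqfull} (with the $r>r_5>2$ check) to force $q\mid n$, rule out $v_q(n)\ge 2$ via Corollary~\ref{notq2}, and then divide through by $\sigma_{-r}(q)$ to land in Zubrilina's known gap $(u_m,\sigma_{-r}(p_m))$. The only small omission is that you do not explicitly note $G$ is nonempty (i.e.\ $\sigma_{-r}(q)u_m<\sigma_{-r}(qp_m)$), which follows immediately from $u_m<\sigma_{-r}(p_m)$ since $p_m$ is $r$-mighty; also, reusing the letter $m$ for both the index of $p_m$ and the cofactor of $q^a$ in $n$ is a minor notational clash you may wish to clean up.
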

\begin{proof}
	Note that the endpoints of $G$ are in $\sigma_{-r}(\S)$ and that
	$$u_m\sigma_{-r}(q)<\sigma_{-r}(qp_m)$$
	because $u_m<\sigma_{-r}(p_m)$. Hence, we only need to show that $\sigma_{-r}(\N)\cap G=\emptyset$. Fix some $n\in\N$ such that $\sigma_{-r}(n)\in G$. By Corollary \ref{divq}, we must have $q\mid n$, so we can write $n=qs$. Because $p_m>q^2$, Corollary \ref{notq2} tells us that $\sigma_{-r}(qp_m)<\sigma_{-r}(q^2)$, which means that $q\nmid s$. Thus, $\sigma_{-r}(n)=\sigma_{-r}(q)\sigma_{-r}(s)$ and
	$$u_m\sigma_{-r}(q) <\sigma_{-r}(q)\sigma_{-r}(s)<\sigma_{-r}(qp_m)\iff u_m<\sigma_{-r}(s)<\sigma_{-r}(p_m).$$
    We already know that $(u_m,\sigma_{-r}(p_m))$ is a gap, so we are done.
\end{proof}
\begin{cor}\label{uglybound}
    Let $S_r$ denote the set of $r$-mighty primes. We have
	$$C_r\ge1+\sum_{q\in S_r\cup\{1\}}\#\brackets{p\in S_r:p>q^2}.$$
\end{cor}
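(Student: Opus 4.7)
My plan is to parameterize gaps of $\sigma_{-r}(\S)$ by pairs $(q, p_m) \in (S_r \cup \{1\}) \times S_r$ with $q^2 < p_m$, show these gaps are pairwise distinct, and invoke the identity $C_r = 1 + \#\text{gaps}$. Setting $\sigma_{-r}(1) := 1$ unifies Zubrilina's gap $(u_m, \sigma_{-r}(p_m))$ (for $p_m \in S_r$) with the Theorem~\ref{newgap} gap $(\sigma_{-r}(q)u_m, \sigma_{-r}(qp_m))$ (for $q \in S_r$ with $q^2 < p_m$) into the single form
$$G_{q,m} := \parens{\sigma_{-r}(q)u_m,\ \sigma_{-r}(qp_m)},$$
with $q = 1$ reproducing the former and $q \in S_r$ the latter.

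By Proposition~\ref{stein}, $\sigma_{-r}(\S) = \overline{\sigma_{-r}(\N)}$ is a closed bounded subset of $[1, \zeta(r)]$, and since both extreme values $1 = \sigma_{-r}(1)$ and $\zeta(r) = \sigma_{-r}(\prod_p p^\infty)$ lie in it, $C_r$ equals exactly one plus the number of its gaps. The number of admissible pairs is visibly $\sum_{q \in S_r \cup \{1\}} \#\{p \in S_r : p > q^2\}$, so the corollary will follow once I know that distinct admissible pairs contribute distinct gaps.

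The main obstacle is precisely this distinctness. I would recover the pair from the upper endpoint $\sigma_{-r}(qp_m)$: the integer $qp_m$ is either a single $r$-mighty prime (when $q = 1$) or a squarefree semiprime whose smaller prime factor $q$ satisfies $q^2 <$ its larger prime factor $p_m$ (when $q \in S_r$), so its prime factorization directly determines the pair $(q, p_m)$. It thus suffices to verify that $\sigma_{-r}$ is injective on this restricted class of integers. This reduces to ruling out two kinds of collisions: a prime equaling a semiprime in $\sigma_{-r}$-value, which would force $p^{-r} = q'^{-r} + p'^{-r} + (q'p')^{-r}$ and hence trap $p$ inside a narrow interval below $q'$ that contains no prime (using $p' > q'^2$ and $r > r_3 > 1.8$); and two distinct semiprimes colliding, which would force $(1+q^{-r})(1+p_m^{-r}) = (1+q'^{-r})(1+p_{m'}^{-r})$ and hence, with $q < q'$ and $p_{m'} > q'^2$, a similar impossibility since the RHS--$1$ is dominated by $q'^{-r}(1 + q'^{-r} + q'^{-2r})$ while the LHS--$1$ is bounded below by $q^{-r}$. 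Both impossibilities are established by routine estimates in the spirit of Lemma~\ref{divqfull} and the computation preceding Corollary~\ref{notq2}, leveraging the spacing $q^2 < p_m$ and the $r$-mighty lower bound on $r$.
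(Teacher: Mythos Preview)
Your overall plan is correct and, in fact, supplies an argument the paper omits entirely: the corollary is stated without proof, so the distinctness of the gaps $G_{q,m}$ is left implicit there. You are right that distinctness is the only nontrivial point, and your reduction to injectivity of $\sigma_{-r}$ on the right endpoints $qp_m$ is a valid strategy. The prime--versus--semiprime case goes through as you say, via a Bernoulli-type estimate exactly parallel to Lemma~\ref{divqfull}.

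The weak spot is the semiprime--versus--semiprime case. Your claimed inequality amounts to $(q'/q)^r > 1 + q'^{-r} + q'^{-2r}$, but for large $q$ with $q'$ the next prime the ratio $q'/q$ is $1 + O(1/q)$, so $(q'/q)^r$ is close to $1$ unless $r$ is large compared to $q$. To push the estimate through you would need a quantitative lower bound on $r$ coming from the $r$-mightiness of some prime exceeding $q'^2$; this is obtainable but is more than the ``routine estimate'' you advertise. A cleaner fix avoids the computation altogether: compare \emph{left} endpoints and invoke Lemma~\ref{divqfull} directly. If $G_{q,m}=G_{q',m'}$ with $2\le q\le q'$, write the common left endpoint as $\sigma_{-r}(a')$ with $a'=q'\prod_{t>m'}p_t^\infty$, whose least prime factor is $q'$. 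Since this value lies strictly between $\sigma_{-r}(q)$ and $\sigma_{-r}(qp_m)$ (using $u_m>1$ and $r$-mightiness of $p_m$), Lemma~\ref{divqfull} forces $q'=q$; then $u_m=u_{m'}$ gives $m=m'$. The remaining case $q=1<q'$ then reduces, via the same lemma applied with $q'$ in the role of $q$, to your prime--versus--semiprime estimate. This route stays entirely within the tools already developed in Section~\ref{gap sec}.
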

\begin{rem}
	The inequality in Corollary \ref{uglybound} is useful when $r$ is small. An asymptotically better lower bound is derived in Section \ref{asym low bound}.
\end{rem}

\section{6 is a Zubrilina Number}
In this section we show that $6$ is a Zubrilina number. In other words, $C_r\neq6$ for all $r$. Showing this will involve an application of Sanna's algorithm, so we restate his main theorems below.
\begin{notn}
	Let
	$$\S_j=\brackets{n\in\S\mid v_p(n)>0\implies p>p_j}.$$
\end{notn}
\begin{notn}
	Let $L_r$ denote the index of the largest $r$-mighty prime, so $p_{L_r}$ is $r$-mighty, but $m>L_r$ implies that $p_m$ is not $r$-mighty. If there are no $r$-mighty primes, then we set $L_r=0$.
\end{notn}
\begin{thm}[\cite{sann}, Lemma 2.2]
	$L_r$ is finite for any $r>1$.
\end{thm}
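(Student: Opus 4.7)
The plan is to rewrite the $r$-mightiness condition in a tractable form, and then contradict it for all large $m$ using an elementary prime-density estimate.

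First, I would unpack the definition. That $p_m$ is $r$-mighty means $1 + p_m^{-r} > u_m = \prod_{t > m}(1 - p_t^{-r})^{-1}$. Taking logarithms of both sides (which preserves strict inequality) and applying the standard inequalities $\log(1 + x) \leq x$ for $x \geq 0$ and $-\log(1 - x) \geq x$ for $x \in [0, 1)$ would yield
$$p_m^{-r} \;\geq\; \log\!\bigl(1 + p_m^{-r}\bigr) \;>\; -\sum_{t > m} \log\!\bigl(1 - p_t^{-r}\bigr) \;\geq\; \sum_{t > m} p_t^{-r}.$$
So $r$-mightiness of $p_m$ forces the single term $p_m^{-r}$ to dominate the entire tail of the prime zeta function at $r$.

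It therefore suffices to show that $\sum_{t > m} p_t^{-r} > p_m^{-r}$ for all sufficiently large $m$. For this I would invoke a Chebyshev-style lower bound $\pi(x) \geq cx/\log x$ valid for large $x$ and some absolute $c > 0$, which also yields $p_n = \Theta(n \log n)$. Choosing any constant $C > 1/c$, the interval $(p_m, Cp_m]$ contains at least $(cC - 1 + o(1))m$ primes as $m \to \infty$, each of size at most $Cp_m$, hence contributing at least $(Cp_m)^{-r} = C^{-r} p_m^{-r}$ to the tail sum. Consequently,
$$\sum_{t > m} p_t^{-r} \;\geq\; \bigl(cC - 1 + o(1)\bigr)\, m \cdot C^{-r} p_m^{-r},$$
and the prefactor of $p_m^{-r}$ tends to $\infty$ with $m$, so it eventually exceeds $1$ and contradicts the previous display.

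The main obstacle is calibrating the prime-counting input. Bertrand's postulate alone only supplies one prime per doubling and yields the geometric-series bound $\sum_{t > m} p_t^{-r} \geq p_m^{-r}/(2^r - 1)$, which falls short of $p_m^{-r}$ for every $r > 1$; so one genuinely needs a density estimate rather than just a spacing estimate. Any Chebyshev-style bound $\pi(x) \gg x/\log x$ closes the gap, however, so the argument remains entirely elementary.
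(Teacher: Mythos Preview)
The paper does not actually prove this theorem: it is quoted from Sanna \cite{sann} and used as a black box, so there is no in-paper argument to compare against. Evaluated on its own merits, your approach is sound. The reduction of $r$-mightiness to the inequality $p_m^{-r} > \sum_{t>m} p_t^{-r}$ via the log inequalities is correct, and so is the insight that a density estimate (not merely Bertrand) is what forces this to fail for large $m$.

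One small calibration issue: invoking only a lower Chebyshev bound $\pi(x)\ge cx/\log x$ is not enough to reach the count ``$(cC-1+o(1))m$''. From the lower bound alone you get $\pi(Cp_m)\ge cCp_m/\log(Cp_m)$, but to convert $p_m/\log p_m$ back into something $\gtrsim m$ you need the \emph{upper} Chebyshev bound $m=\pi(p_m)\le c' p_m/\log p_m$ (equivalently, the lower half of $p_n=\Theta(n\log n)$, which a one-sided bound does not give). With both Chebyshev inequalities one obtains $\pi(Cp_m)-m \ge (c_1C/c_2 - 1 + o(1))m$, which is positive once $C>c_2/c_1$, and then your contribution $\ge (\text{const})\,m\cdot(Cp_m)^{-r}$ does exceed $p_m^{-r}$ for large $m$. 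Alternatively, citing the Prime Number Theorem directly makes the constants you wrote correct as stated. Either way the argument goes through; just be explicit that both sides of Chebyshev (or PNT) are being used.
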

\begin{thm}[\cite{sann}, Lemma 2.3]\label{interval}\footnote{Sanna's original formulation is that $\overline{\sigma_{-r}(\N_{L_r})}=[1,u_{L_r}]$, where $\N_j=\S_j\cap\N$.}
	$$\sigma_{-r}(\S_{L_r})=\sqbracks{1,u_{L_r}}.$$
\end{thm}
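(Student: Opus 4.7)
The plan is to prove both inclusions directly, without invoking Proposition \ref{stein}. The containment $\sigma_{-r}(\S_{L_r})\subseteq[1,u_{L_r}]$ is immediate: for any $n=\prod_{t>L_r}p_t^{\alpha_t}\in\S_{L_r}$, multiplicativity gives
$$1=\sigma_{-r}(1)\le\sigma_{-r}(n)=\prod_{t>L_r}\sigma_{-r}(p_t^{\alpha_t})\le\prod_{t>L_r}\frac1{1-p_t^{-r}}=u_{L_r},$$
since each factor $\sigma_{-r}(p_t^{\alpha_t})$ is an increasing function of $\alpha_t$ bounded above by $\frac1{1-p_t^{-r}}$.

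For the reverse inclusion, I will construct $n\in\S_{L_r}$ with $\sigma_{-r}(n)=v$ for an arbitrary $v\in[1,u_{L_r}]$ via a greedy procedure. Set $v_{L_r}:=v$, and for each $t>L_r$ choose $\alpha_t\in\Z_{\ge0}\cup\{\infty\}$ to be the largest value with $\sigma_{-r}(p_t^{\alpha_t})\le v_{t-1}$; then put $v_t:=v_{t-1}/\sigma_{-r}(p_t^{\alpha_t})$. Such an $\alpha_t$ exists because $\sigma_{-r}(p_t^{\alpha})$ is monotone in $\alpha$ with limit $\frac1{1-p_t^{-r}}$, and equals $1$ at $\alpha=0\le v_{t-1}$.

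The main obstacle is proving inductively that $v_t\in[1,u_t]$ for every $t\ge L_r$, which keeps the algorithm from stalling. The lower bound $v_t\ge1$ is automatic from the choice of $\alpha_t$, and the case $\alpha_t=\infty$ (which forces $v_{t-1}\ge\frac1{1-p_t^{-r}}$) easily gives $v_t\le u_{t-1}(1-p_t^{-r})=u_t$. The crux is the case of finite $\alpha_t$, in which the maximality yields $v_{t-1}<\sigma_{-r}(p_t^{\alpha_t+1})$; to conclude $v_t\le u_t$ it suffices to show
$$\frac{\sigma_{-r}(p_t^{\alpha_t+1})}{\sigma_{-r}(p_t^{\alpha_t})}\le u_t.$$
I will verify this by combining the elementary estimate $\sigma_{-r}(p_t^{\alpha+1})/\sigma_{-r}(p_t^{\alpha})\le 1+p_t^{-r}$ for every $\alpha\ge0$ (a short algebraic check using $\sigma_{-r}(p_t^\alpha)\ge p_t^{-r\alpha}$) with the inequality $1+p_t^{-r}\le u_t$, which is precisely the statement that $p_t$ is not $r$-mighty. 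The latter holds because $t>L_r$ and $L_r$ is by definition the index of the largest $r$-mighty prime; this is where the hypothesis enters essentially.

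To finish, observe that $u_t\to1$ as $t\to\infty$ (since $\log u_t=\sum_{s>t}-\log(1-p_s^{-r})\to0$), so the sandwich $1\le v_t\le u_t$ forces $v_t\to1$. Since by construction the partial product $\prod_{t'=L_r+1}^{t}\sigma_{-r}(p_{t'}^{\alpha_{t'}})$ equals $v/v_t$, taking $t\to\infty$ gives $\sigma_{-r}(n)=v$ for $n:=\prod_{t>L_r}p_t^{\alpha_t}\in\S_{L_r}$, completing the proof.
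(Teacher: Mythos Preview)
The paper does not supply its own proof of this statement; it is quoted from Sanna \cite{sann} (Lemma~2.3), with the footnote explaining that the original was phrased for $\overline{\sigma_{-r}(\N_{L_r})}$ rather than $\sigma_{-r}(\S_{L_r})$. So there is nothing in the paper to compare your argument against line by line.

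That said, your proof is correct and is essentially the natural greedy construction one expects here. A few minor remarks. First, the phrase ``equals $1$ at $\alpha=0\le v_{t-1}$'' is garbled; you mean that $\sigma_{-r}(p_t^0)=1\le v_{t-1}$, which follows from the inductive hypothesis $v_{t-1}\ge1$. Second, your justification of the key inequality $\sigma_{-r}(p_t^{\alpha+1})/\sigma_{-r}(p_t^{\alpha})\le 1+p_t^{-r}$ is fine, though it is perhaps cleaner to note that the ratio equals $1+p_t^{-r(\alpha+1)}/\sigma_{-r}(p_t^{\alpha})\le 1+p_t^{-r(\alpha+1)}\le 1+p_t^{-r}$ since $\sigma_{-r}(p_t^\alpha)\ge1$. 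Third, the use of the hypothesis is exactly right: the inequality $1+p_t^{-r}\le u_t$ for $t>L_r$ is precisely the failure of $r$-mightiness, and the paper's definition of $L_r$ guarantees this for every such $t$. Your argument has the small advantage of working directly in $\S_{L_r}$ and producing an explicit preimage, rather than passing through the closure of $\sigma_{-r}(\N_{L_r})$ and then invoking Proposition~\ref{stein}.
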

\begin{thm}[\cite{sann}, Lemma 2.4]
	$$\sigma_{-r}(\S_K)=\bigcup_{i\in\Z_{\ge0}\cup\,\{\infty\}}\sigma_{-r}(p_{K+1}^i)\cdot\sigma_{-r}(\S_{K+1})$$
	where we write $a\cdot X=\{ax\mid x\in X\}$ for a number $a$ and set $X$.
\end{thm}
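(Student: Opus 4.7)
The plan is to derive this essentially as a direct consequence of unique factorization in $\S$ together with multiplicativity of the extended $\sigma_{-r}$: every element $n\in\S_K$ factors uniquely as $n=p_{K+1}^i\cdot m$ with $i\in\Z_{\ge 0}\cup\{\infty\}$ and $m\in\S_{K+1}$, and $\sigma_{-r}$ respects this factorization.

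For the inclusion $(\supseteq)$, I would fix $i\in\Z_{\ge 0}\cup\{\infty\}$ and $m\in\S_{K+1}$, then set $n=p_{K+1}^i\cdot m\in\S$. Since $v_p(m)=0$ for all $p\le p_{K+1}$ and $p_{K+1}>p_K$, we have $v_p(n)=0$ for $p\le p_K$, so $n\in\S_K$. Because $p_{K+1}\nmid m$, multiplicativity of $\sigma_{-r}$ on $\S$ gives $\sigma_{-r}(n)=\sigma_{-r}(p_{K+1}^i)\sigma_{-r}(m)$, which lies in the right-hand side.

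For the reverse inclusion $(\subseteq)$, I would fix $n\in\S_K$, put $i:=v_{p_{K+1}}(n)\in\Z_{\ge 0}\cup\{\infty\}$, and define $m\in\S$ by $v_{p_{K+1}}(m)=0$ and $v_p(m)=v_p(n)$ for every other prime $p$. Then $n\in\S_K$ forces $v_p(m)=0$ for all $p\le p_{K+1}$, so $m\in\S_{K+1}$, and by construction $n=p_{K+1}^i\cdot m$. Multiplicativity once again yields $\sigma_{-r}(n)=\sigma_{-r}(p_{K+1}^i)\sigma_{-r}(m)$, placing $\sigma_{-r}(n)$ in the union on the right.

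The only real subtlety is making sure multiplicativity is legitimate when the exponent $i$ or one of the exponents $v_p(m)$ is infinite. But this is built into how $\sigma_c$ was extended to $\S$ in the remark following Proposition \ref{stein}, where $\sigma_c(p^\infty)$ is defined as the limit $1/(1-p^c)$ and $\sigma_c$ is \emph{required} to be multiplicative across all Steinitz factorizations. Thus the main content of the statement is really just the bookkeeping of this extension; I expect no genuine obstacle beyond carefully justifying that the $i=\infty$ branch of the union is accounted for by the extension and matches the limiting behavior.
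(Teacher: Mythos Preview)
Your argument is correct and is exactly the natural one: factor $n\in\S_K$ uniquely as $p_{K+1}^i\cdot m$ with $m\in\S_{K+1}$ and apply multiplicativity of the extended $\sigma_{-r}$. Note that the paper itself gives no proof of this statement; it is simply quoted from Sanna (whose original formulation is for $\N_K$ rather than $\S_K$, the Steinitz version following once $\sigma_{-r}$ is extended as in the paper). One small slip: in your $(\supseteq)$ paragraph you conclude that $\sigma_{-r}(n)$ ``lies in the right-hand side,'' but you mean the left-hand side, since you began with data from the right and produced an element of $\sigma_{-r}(\S_K)$.
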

\begin{thm}[\cite{sann}, Lemma 2.5]
	Let $I=[a,b]$, and fix a prime $p$. Let $t$ be the least non-negative integer such that
	$$\frac{\sigma_{-r}(p^{t+1})}{\sigma_{-r}(p^t)}\le\frac ba.$$
	Then, the following is a decomposition into disjoint intervals
	$$\bigcup_{i\in\Z_{\ge0}\cup\,\{\infty\}}\sigma_{-r}(p^i)\cdot I=\parens{\bigsqcup_{0\le i<t}\sigma_{-r}(p^i)\cdot I}\sqcup\sqbracks{a\sigma_{-r}(p^t),b\sigma_{-r}(p^\infty)}.$$
\end{thm}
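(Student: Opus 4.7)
The plan is to reduce the entire claim to a single monotonicity fact about the consecutive ratios $\rho_i := \sigma_{-r}(p^{i+1})/\sigma_{-r}(p^i)$. Writing $x := p^{-r}\in(0,1)$, one has $\rho_i = (1-x^{i+2})/(1-x^{i+1})$, and a direct cross-multiplication shows $\rho_i>\rho_{i+1}$ reduces to $x^{i+1}(1-x)^2>0$. Thus $(\rho_i)$ is strictly decreasing with $\rho_i\to1$. In particular the threshold index $t$ in the statement exists (assuming $b>a$; the degenerate case $b=a$ is vacuous), and one has $\rho_i>b/a$ for all $i<t$ and $\rho_i\le b/a$ for all $i\ge t$.

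With monotonicity in hand, the disjointness half is immediate. The interval $\sigma_{-r}(p^i)\cdot I$ has right endpoint $b\sigma_{-r}(p^i)$, while the next piece in line (either $\sigma_{-r}(p^{i+1})\cdot I$ or, when $i=t-1$, the tail block $[a\sigma_{-r}(p^t),b\sigma_{-r}(p^\infty)]$) has left endpoint $a\sigma_{-r}(p^{i+1})$. The strict inequality $\rho_i>b/a$, valid for $i<t$, rearranges precisely to $a\sigma_{-r}(p^{i+1})>b\sigma_{-r}(p^i)$, exhibiting a genuine gap between consecutive pieces. Iterating gives pairwise disjointness of the $t+1$ pieces on the right-hand side.

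For the covering half, I would check that $\bigcup_{i\ge t}\sigma_{-r}(p^i)\cdot I$ together with the $i=\infty$ piece $\sigma_{-r}(p^\infty)\cdot I=[a\sigma_{-r}(p^\infty),b\sigma_{-r}(p^\infty)]$ fills out $[a\sigma_{-r}(p^t),b\sigma_{-r}(p^\infty)]$ exactly. Indeed, for $i\ge t$ the inequality $\rho_i\le b/a$ rearranges to $a\sigma_{-r}(p^{i+1})\le b\sigma_{-r}(p^i)$, so consecutive finite intervals overlap or touch, and the chain union is an interval starting at $a\sigma_{-r}(p^t)$ with supremum $b\sigma_{-r}(p^\infty)$; this supremum is never attained for finite $i$, so the union is half-open on the right. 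The $i=\infty$ term lies inside $[a\sigma_{-r}(p^t),b\sigma_{-r}(p^\infty)]$ (since $\sigma_{-r}(p^\infty)\ge\sigma_{-r}(p^t)$) and supplies exactly the missing right endpoint $b\sigma_{-r}(p^\infty)$, completing the closed interval.

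The only substantive input is the monotonicity of $\rho_i$, and that is the step I would verify first and most carefully; once it is in hand, the remainder of the argument is purely bookkeeping, translating a single inequality $\rho_i\lessgtr b/a$ into endpoint comparisons of the form $a\sigma_{-r}(p^{i+1})\lessgtr b\sigma_{-r}(p^i)$. I do not expect a conceptual obstacle beyond that lemma.
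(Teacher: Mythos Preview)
Your argument is correct. The key observation---that the consecutive ratios $\rho_i=(1-x^{i+2})/(1-x^{i+1})$ are strictly decreasing to $1$---is exactly what drives both the disjointness of the first $t$ scaled copies of $I$ and the coalescing of the remaining copies (including $i=\infty$) into the single closed interval $[a\sigma_{-r}(p^t),\,b\sigma_{-r}(p^\infty)]$. Your bookkeeping on endpoints is accurate, including the detail that the $i=\infty$ term is needed to close the right endpoint.

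As for comparison: the paper does not give its own proof of this statement. It is quoted verbatim as Lemma~2.5 of Sanna's paper and used as a black box in the algorithm that follows, so there is nothing in the present paper to compare your proof against.
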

Sanna used these theorems to construct the following backwards induction algorithm for calculating $\sigma_{-r}(\S_0)=\sigma_{-r}(\S)$ from $\sigma_{-r}(\S_{L_r})$:
\begin{enumerate}
	\item We know at the start that $\sigma_{-r}(\S_{L_r})=\sqbracks{1,u_{L_r}}$.
	\item Suppose we have $\sigma_{-r}(\S_K)=\bigcup_{j\in J}I_j$ for some $K\in\N$ and index set $J$. Write $I_j=[a_j,b_j]$ and let $t_j$ be the least non-negative integer such that
	$$\frac{\sigma_{-r}(p_K^{t_j+1})}{\sigma_{-r}(p_K^{t_j})}\le\frac{b_j}{a_j}.$$
	\item We have
	$$\sigma_{-r}(\S_{K-1})=\bigcup_{j\in J}\parens{\bigsqcup_{0\le i<t_j}\sigma_{-r}(p_K^i)\cdot I_j\sqcup\sqbracks{a_j\sigma_{-r}(p_K^{t_j}),b_j\sigma_{-r}(p_K^\infty)}}.$$
\end{enumerate}
We will apply this algorithm to bound $C_r$ when $L_r \le 3$:
\begin{lemma}\label{Lr3}
	If $L_r\le3$, then $C_r\le5$.
\end{lemma}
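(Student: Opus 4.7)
My plan is to apply Sanna's backward induction algorithm, starting from $\sigma_{-r}(\S_{L_r}) = [1, u_{L_r}]$ and iteratively applying his decomposition lemma to pass from $\sigma_{-r}(\S_K)$ to $\sigma_{-r}(\S_{K-1})$, with a case-split on $L_r$. The ordering (\ref{mightyorder}) rules out $L_r = 1$ since $p_1 = 2$ being $r$-mighty forces $p_2 = 3$ to be as well, so it suffices to handle $L_r \in \{0, 2, 3\}$. For $L_r = 0$, $\sigma_{-r}(\S) = [1, u_0]$ is a single interval, so $C_r = 1$.

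For $L_r = 2$, two applications of Sanna's decomposition suffice, first with $p = 3$ and then with $p = 2$. At each step I would verify that the parameter $t$ from Sanna's lemma satisfies $t \leq 1$, by combining the easy bound $\sigma_{-r}(p^2)/\sigma_{-r}(p) < 1 + p^{-2r}$ with lower bounds on the relevant $u_m$ coming from truncated Euler products. Since each step at most doubles the number of intervals, the total after two steps is at most $2^2 = 4$, giving $C_r \leq 4 \leq 5$.

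For $L_r = 3$, all of $\{2, 3, 5\}$ are $r$-mighty, and Sanna's algorithm runs for three steps. Step 1 ($p = 5$, with $t = 1$) gives $[1, u_3] \sqcup [\sigma_{-r}(5), u_2]$. Step 2 ($p = 3$, with $t = 1$ on each) a priori produces four intervals, but the two tails $[\sigma_{-r}(3), u_3/(1-3^{-r})]$ and $[\sigma_{-r}(15), u_1]$ overlap --- which rearranges to $(1-3^{-2r})\sigma_{-r}(5) \leq u_3$, reflecting that $5$ is only barely $r$-mighty in this range --- and merge into $[\sigma_{-r}(3), u_1]$, leaving three intervals. Step 3 ($p = 2$) is the delicate one: on $[1, u_3]$ and $[\sigma_{-r}(5), u_2]$ the ratio $\sigma_{-r}(4)/\sigma_{-r}(2)$ exceeds $u_3$, so one instead has $t = 2$ (verified using $\sigma_{-r}(2^3)/\sigma_{-r}(2^2) < 1 + 2^{-3r} < 1 + 7^{-r} \leq u_3$), producing $3$ pieces each, while on $[\sigma_{-r}(3), u_1]$ one has $t = 1$, producing $2$ pieces, for a pre-merge total of $8$. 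Sorting by left endpoint and comparing right endpoints then shows that four of these pieces --- the tail $[\sigma_{-r}(4), u_3/(1-2^{-r})]$ from the first interval, the middle piece $[\sigma_{-r}(10), u_2 \sigma_{-r}(2)]$ and tail $[\sigma_{-r}(20), u_2/(1-2^{-r})]$ from the second, and the tail $[\sigma_{-r}(6), u_0]$ from the third --- pairwise overlap and fuse into the single component $[\sigma_{-r}(10), u_0]$, yielding exactly $5$ connected components.

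The main obstacle will be verifying, uniformly in $r$ across the relevant range, the four merge-triggering inequalities $\sigma_{-r}(15) \leq u_3/(1-3^{-r})$, $\sigma_{-r}(4) \leq u_2 \sigma_{-r}(2)$, $\sigma_{-r}(20) \leq u_3/(1-2^{-r})$, and $\sigma_{-r}(6) \leq u_2/(1-2^{-r})$. Each reduces, after canceling common factors, to a bound on the excess $\sigma_{-r}(p) - u_{m(p)}$ for a mighty prime $p$, which should be provable by Bernoulli-type estimates in the spirit of Corollaries \ref{divq} and \ref{notq2}.
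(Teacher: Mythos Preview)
Your proposal is correct and follows essentially the same route as the paper: Sanna's backward induction with the same $t$-values ($t=1$ at the $p=5$ and $p=3$ stages, $t_1=t_2=2$ and $t_3=1$ at the $p=2$ stage) and the same four merges, collapsing eight intervals to the five listed in the paper. The only differences are cosmetic: the paper disposes of $L_r\in\{0,2\}$ by citing Defant and Zubrilina rather than re-running the algorithm, and it verifies the merge inequalities and the $t$-values by invoking Mathematica together with the numerical window $r_5<r<r_7$ (so $2.2<r<2.5$), whereas you propose to certify the same inequalities by hand via Bernoulli-type bounds.
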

\begin{proof}
	Defant \cite{coin1} showed that $C_r=1$ when $L_r=0$ while Zubrilina \cite{zubr} showed that $C_r=3$ when  $L_r=2$. $L_r$ cannot equal 1 by (\ref{mightyorder}), so we may assume that $L_r=3$. Note that the $r$-mighty primes must be $2,3$ and $5$. We apply Sanna's algorithm:
	\begin{enumerate}
		\item We start with $\sigma_{-r}(\S_3)=[1,u_3]$.
		\item We now seek the least $t\in\Z_{\ge0}$ such that
		$$\frac{\sigma_{-r}(5^{t+1})}{\sigma_{-r}(5^t)}<\sigma_{-r}(u_3).$$
		Using that $2.2<r_5<r<r_7<2.5$, a computer calculation shows that $t=1$. Hence,
		$$\sigma_{-r}(\S_2)=\sqbracks{1,u_3}\cup\sqbracks{1+\frac1{5^r},u_2}.$$
		\item Now, we find that the smallest $t_1,t_2\in\Z_{\ge0}$ such that
		$$\frac{\sigma_{-r}(3^{t_1+1})}{\sigma_{-r}(3^{t_1})}<u_3\Text{and}{\sspacing}\frac{\sigma_{-r}(3^{t_2+1})}{\sigma_{-r}(3^{t_2})}<\frac{u_2}{1+\frac1{5^r}}$$
		are $t_1=t_2=1$. This gives
		$$\sigma_{-r}(\S_1)=\sqbracks{1,u_3}\cup\sqbracks{1+\frac1{5^r},u_2}\cup\sqbracks{1+\frac1{3^r},\frac{u_3}{1-3^{-r}}}\cup\sqbracks{\parens{1+\frac1{5^r}}\parens{1+\frac1{3^r}},u_1}.$$
		However, using Mathematica \cite{math}, we discover that for $r<2.5$, we have
		$$\parens{1+\frac1{5^r}}\parens{1+\frac1{3^r}}<\frac{u_3}{1-3^{-r}}.$$
		This allows us to coalesce the last two intervals, writing
		$$\sigma_{-r}(\S_1)=\sqbracks{1,u_3}\cup\sqbracks{1+\frac1{5^r},u_2}\cup\sqbracks{1+\frac1{3^r},u_1}.$$
		\item Last step. We need the smallest $t_1,t_2,t_3\in\Z_{\ge0}$ such that
		$$\frac{\sigma_{-r}(2^{t_1+1})}{\sigma_{-r}(2^{t_1})}< u_3\Text{and}{\xsspacing}\frac{\sigma_{-r}(2^{t_2+1})}{\sigma_{-r}(2^{t_2})}<\frac{u_2}{1+\frac1{5^r}}\Text{and}{\xsspacing}\frac{\sigma_{-r}(2^{t_3+1})}{\sigma_{-r}(2^{t_3})}<\frac{u_1}{1+\frac1{3^r}}.$$
		These are $t_1=2$, $t_2=2$, and $t_3=1$. This gives the %\hl{trash} 
		decomposition
		\begin{align*}
		\overline{\sigma_{-r}(\N)} = &\phantom{\cup}\sqbracks{1,u_3}\cup\sqbracks{1+\frac1{5^r},u_2}\cup\sqbracks{1+\frac1{3^r},u_1}\cup\sqbracks{\parens{1+\frac1{2^r}}\parens{1+\frac1{3^r}},\zeta(r)}\\
		&\cup\sqbracks{\parens{1+\frac1{2^r}}\parens{1+\frac1{5^r}},\parens{1+\frac1{2^r}}u_2}\cup\sqbracks{1+\frac1{2^r},\parens{1+\frac1{2^r}}u_3}\\
		&\cup\sqbracks{\parens{1+\frac1{2^r}+\frac1{4^r}}\parens{1+\frac1{5^r}},\frac{u_2}{1-2^{-r}}}\cup\sqbracks{1+\frac1{2^r}+\frac1{4^r},\frac{u_3}{1-2^{-r}}}.
		\end{align*}
		Using the fact that $r<2.5$ along with Mathematica \cite{math} allows us to combine the the last two intervals with the 4th and 5th intervals, resulting in
		$$\sigma_{-r}(\S)=\sqbracks{1,u_3}\cup\sqbracks{1+\frac1{5^r},u_2}\cup\sqbracks{1+\frac1{3^r},u_1}\cup\sqbracks{1+\frac1{2^r},\parens{1+\frac1{2^r}}u_3}\cup\sqbracks{\parens{1+\frac1{2^r}}\parens{1+\frac1{5^r}},\zeta(r)},$$
		which is a union of 5 intervals.
		\qedhere
	\end{enumerate}
\end{proof}
\begin{lemma}\label{Lr4}
	If $L_r\ge4$, then $C_r\ge7$.
\end{lemma}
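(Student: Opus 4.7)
The plan is to apply Corollary \ref{uglybound} almost verbatim; the only real content is checking that the hypothesis $L_r \ge 4$ forces enough small primes into the set $S_r$ of $r$-mighty primes to push the lower bound from that corollary up to $7$.

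First I will confirm that $\{2,3,5,7\} \subseteq S_r$. Since $L_r \ge 4$, the prime $p_4 = 7$ is $r$-mighty, so by Zubrilina's criterion (namely, $p$ is $r$-mighty iff $r > r_p$) we have $r > r_7$. The ordering $r_3 < r_2 < r_5 < r_7$ from (\ref{mightyorder}) then gives $r > r_2, r_3, r_5$ as well, and another application of Zubrilina's criterion places $2, 3, 5$ into $S_r$.

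Next I will invoke Corollary \ref{uglybound}, retaining only the $q \in \{1, 2\}$ terms of its sum:
$$C_r \;\ge\; 1 + \#\{p \in S_r : p > 1\} + \#\{p \in S_r : p > 4\} \;\ge\; 1 + 4 + 2 \;=\; 7,$$
using that the $q = 1$ term counts all of $|S_r| \ge 4$ while the $q = 2$ term counts at least $5$ and $7$ (both of which exceed $2^2 = 4$); the remaining terms in the sum are nonnegative. There is no real obstacle here, as all the nontrivial work has already been packaged into Theorem \ref{newgap} and its corollary, so the lemma is a clean bookkeeping consequence of the gap-construction machinery of Section \ref{gap sec} combined with the mightiness ordering (\ref{mightyorder}).
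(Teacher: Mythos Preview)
Your proof is correct and follows the same approach as the paper: show that $L_r\ge4$ forces $2,3,5,7\in S_r$ via the ordering (\ref{mightyorder}), then feed the $q=1$ and $q=2$ terms into Corollary~\ref{uglybound} to get $1+4+2=7$. The paper's proof is just a terser version of yours, noting in one line that $5,7>2^2$ makes Corollary~\ref{uglybound} yield $C_r\ge7$.
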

\begin{proof}
	By (\ref{mightyorder}), the four $r$-mighty primes must be $2,3,5$, and $7$. Since $5$ and $7$ are greater than $2^2$, Corollary \ref{uglybound} shows that $C_r\ge 7$.
\end{proof}
Combining Lemmas \ref{Lr3} and \ref{Lr4}, we conclude:
\begin{thm}
	6 is a Zubrilina number.
\end{thm}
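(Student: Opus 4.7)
The plan is a dichotomy on $L_r$, the index of the largest $r$-mighty prime. If I can show $C_r \le 5$ when $L_r \le 3$ and $C_r \ge 7$ when $L_r \ge 4$, then $C_r = 6$ is impossible, and the theorem follows immediately by combining the two cases.

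For the easier direction ($L_r \ge 4$), the ordering in (\ref{mightyorder}) forces $S_r \supseteq \{2,3,5,7\}$. I would then invoke Corollary \ref{uglybound}, computing the sum $\sum_{q \in S_r \cup \{1\}} \#\{p \in S_r : p > q^2\}$. The $q = 1$ term contributes at least $4$ (all of $2,3,5,7$ exceed $1$), the $q = 2$ term contributes at least $2$ (namely $5,7$, both exceeding $4$), and the remaining $q \in \{3,5,7\}$ contribute nonnegatively. This yields $C_r \ge 1 + 6 = 7$.

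For the harder direction ($L_r \le 3$), the ordering in (\ref{mightyorder}) rules out $L_r = 1$, and the cases $L_r = 0$ and $L_r = 2$ already give $C_r = 1, 3$ by prior work of Defant and Zubrilina, respectively. The remaining case $L_r = 3$ forces $S_r = \{2,3,5\}$ and $r \in (r_5, r_7) \subset (2.2, 2.5)$. Here I would unroll Sanna's backwards algorithm from $\sigma_{-r}(\S_3) = [1, u_3]$, successively attaching powers of $5$, then $3$, then $2$ to descend through $\S_2, \S_1, \S_0$. At each stage one identifies the minimal merging exponent $t$ satisfying $\sigma_{-r}(p^{t+1})/\sigma_{-r}(p^t) \le b/a$ on each interval $[a,b]$, producing a finer disjoint union.

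The main obstacle is the blow-up in the number of intervals at the last stage: the naive union produced by powers of $2$ generically yields $8$ intervals, so to conclude $C_r \le 5$ one must show that several pairs of consecutive intervals overlap and thus collapse into larger intervals. Each such overlap reduces to an explicit inequality in $r$ on the range $(2.2, 2.5)$ and is checkable via computer algebra; indeed, I would expect Mathematica to dispatch these numerically. Given the narrowness of the interval for $r$, these overlaps should be quite robust, so the computation—though tedious—is routine once the intervals are listed.
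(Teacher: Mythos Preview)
Your proposal is correct and follows the paper's proof essentially verbatim: the same dichotomy on $L_r$, the same invocation of Corollary~\ref{uglybound} with $q\in\{1,2\}$ to get $C_r\ge 7$ when $L_r\ge 4$, and the same case split (Defant for $L_r=0$, Zubrilina for $L_r=2$, impossibility of $L_r=1$, and Sanna's algorithm plus Mathematica-verified overlaps for $L_r=3$) to get $C_r\le 5$ when $L_r\le 3$. The only detail you omit is that one overlap already occurs at the $\S_1$ stage (reducing four intervals to three) before the final collapse at $\S_0$, but this is implicit in your sketch and does not change the argument.
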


\section{An Exponential Lower Bound}\label{asym low bound}
In this section we show that $C_r \ge 2^{\pi(r - 1)}$. This makes the upper and lower bounds on $C_r$ both exponential in $r$.

\begin{lemma}\label{Less than r}
	If $1 \leq d \leq r - 1$, then $\sum_{n = d+1}^\infty n^{-r} < d^{-r}$.
\end{lemma}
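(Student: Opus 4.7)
The plan is to use the standard integral comparison for decreasing functions. Since the function $x \mapsto x^{-r}$ is strictly decreasing on $[1, \infty)$ for $r > 1$, for each integer $n \ge d+1 \ge 2$ we have the strict inequality
$$n^{-r} < \int_{n-1}^{n} x^{-r} \, dx,$$
because the integrand is strictly greater than $n^{-r}$ on the interior of the interval $[n-1,n]$.

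Summing these bounds telescopically over $n \ge d+1$ collapses the right-hand side into a single improper integral:
$$\sum_{n=d+1}^{\infty} n^{-r} < \int_{d}^{\infty} x^{-r} \, dx = \frac{d^{1-r}}{r-1} = \frac{d}{r-1} \cdot d^{-r}.$$
The convergence of the integral uses $r > 1$, which is guaranteed by the hypothesis $d \ge 1$ forcing $r \ge 2$.

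Finally, the hypothesis $d \le r-1$ is exactly what makes the scalar factor $\frac{d}{r-1}$ at most $1$, so we conclude
$$\sum_{n=d+1}^{\infty} n^{-r} < \frac{d}{r-1} \cdot d^{-r} \le d^{-r},$$
and strictness of the first inequality preserves strictness in the final bound (even in the boundary case $d = r-1$ where $\frac{d}{r-1} = 1$). There is no real obstacle here; the only thing to be careful about is matching the index of summation with the lower limit of the integral so that the comparison chain starts at $n = d+1$ and integrates from $d$.
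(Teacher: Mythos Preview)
Your proof is correct and follows essentially the same route as the paper: bound the tail sum by the integral $\int_d^\infty x^{-r}\,dx = \frac{1}{r-1}d^{-r+1}$ via monotonicity, then use $d \le r-1$ to finish. The only difference is that you are more explicit about the strictness and the implicit bound $r \ge 2$.
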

\begin{proof}
	Since $n^{-r}$ is decreasing in $n$, we have $$\sum_{n = d+1}^\infty n^{-r} < \int_d^\infty x^{-r} dx = \frac{1}{r-1} d^{-r+1} \leq d^{-r}.$$
\end{proof}
\begin{thm}
	Let $n$ be a positive integer such that $p^k\le r-1$ whenever $p^k$ is a prime power dividing $n$. Then, the point $\sigma_{-r}(n)$ is the right endpoint of a gap in $\sigma_{-r}(\S)$.
\end{thm}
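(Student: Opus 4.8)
The plan is to exhibit an explicit $\epsilon>0$ (depending only on $n$ and $r$) such that no Steinitz number $m\in\S$ satisfies $\sigma_{-r}(m)\in(\sigma_{-r}(n)-\epsilon,\sigma_{-r}(n))$. Granting this, and noting that $\sigma_{-r}(n)\in\sigma_{-r}(\S)$ while $1=\sigma_{-r}(1)\in\sigma_{-r}(\S)$ sits strictly below $\sigma_{-r}(n)-\epsilon$ (one checks $\epsilon$ can be taken smaller than $\sigma_{-r}(n)-1$), the connected component of $\R\sm\sigma_{-r}(\S)$ meeting $(\sigma_{-r}(n)-\epsilon,\sigma_{-r}(n))$ is a bounded interval whose right endpoint is $\sigma_{-r}(n)$, which is the claim. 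We may assume $n\ge 2$ (for $n=1$, $\sigma_{-r}(n)=1$ is the minimum of $\sigma_{-r}(\S)$ and the statement is read with $n\ge 2$).

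The first reduction is to write the divisor function as a sum over \emph{integer} divisors: expanding the convergent product $\sigma_{-r}(m)=\prod_p\sigma_{-r}(p^{v_p(m)})$ gives $\sigma_{-r}(m)=\sum_{d\in\N,\ d\mid m}d^{-r}$ for every $m\in\S$, and of course $\sigma_{-r}(n)=\sum_{d\mid n}d^{-r}$. Setting $A=\{d\in\N:d\mid n\}$ and $B=\{d\in\N:d\mid m\}$ (both down-closed under divisibility), we have $\sigma_{-r}(m)-\sigma_{-r}(n)=\sum_{d\in B\sm A}d^{-r}-\sum_{d\in A\sm B}d^{-r}$. The hypothesis on $n$ says precisely that every exact prime-power divisor $p^{v_p(n)}$ of $n$ is $\le r-1$; since these prime powers have least common multiple $n$, it follows that $n=\mathrm{lcm}\big(A\cap\{1,\dots,\lfloor r-1\rfloor\}\big)$.

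Now argue by cases on $k_0:=\min(A\triangle B)$, the case $A=B$ being trivial (then $\sigma_{-r}(m)=\sigma_{-r}(n)$). If $k_0>r-1$, then $A$ and $B$ agree on $\{1,\dots,\lfloor r-1\rfloor\}$, so in particular every prime power $p^{v_p(n)}\le r-1$ dividing $n$ lies in $B$, hence divides $m$; thus $n\mid m$ and $\sigma_{-r}(m)\ge\sigma_{-r}(n)$ by monotonicity of $\sigma_{-r}$ in each exponent. If instead $k_0\le r-1$, then $k_0\ge 2$ (as $1\in A\cap B$), and since nothing below $k_0$ lies in $A\triangle B$ both $A\sm B$ and $B\sm A$ are contained in $\{k_0,k_0+1,\dots\}$, with $k_0$ belonging to exactly one of them. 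If $k_0\in B\sm A$, then $\sigma_{-r}(m)-\sigma_{-r}(n)\ge k_0^{-r}-\sum_{j>k_0}j^{-r}>0$ by Lemma~\ref{Less than r} (valid as $2\le k_0\le r-1$), so $\sigma_{-r}(m)>\sigma_{-r}(n)$. If $k_0\in A\sm B$, the same estimate gives $\sigma_{-r}(n)-\sigma_{-r}(m)\ge\delta_{k_0}$, where $\delta_k:=k^{-r}-\sum_{j>k}j^{-r}>0$ for $1\le k\le r-1$ by Lemma~\ref{Less than r}; here $k_0$ is a divisor of $n$ lying in $\{2,\dots,\lfloor r-1\rfloor\}$, a finite, nonempty set (it contains the least prime factor of $n$), so we may take $\epsilon:=\min\{\delta_d:d\mid n,\ 2\le d\le r-1\}>0$, which is independent of $m$. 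In every case $\sigma_{-r}(m)\ge\sigma_{-r}(n)$ or $\sigma_{-r}(m)\le\sigma_{-r}(n)-\epsilon$, as required.

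The step I expect to be the crux is the case analysis just described, and in particular recognizing that one should compare $A$ and $B$ only on the initial segment $\{1,\dots,\lfloor r-1\rfloor\}$. A more naive approach — inducting by peeling off the smallest prime $p$ of $n$ and bounding $\sigma_{-r}(m)$ against $\sigma_{-r}(p^{v_p(n)-1})$ times the value contributed by all larger primes — breaks down, since that auxiliary value can exceed $\sigma_{-r}(n)$ as soon as $v_p(n)\ge 2$. What rescues the divisor-set comparison is exactly Lemma~\ref{Less than r}: for indices $d\le r-1$ the weight $d^{-r}$ dominates the entire tail $\sum_{j>d}j^{-r}$, so the sign, and a uniform lower bound on the size, of $\sum_A d^{-r}-\sum_B d^{-r}$ is controlled lexicographically by the smallest element of $A\triangle B$; the hypothesis on $n$ ensures this smallest element is either at most $r-1$ (giving a quantitative gap) or large enough that $n\mid m$ outright.
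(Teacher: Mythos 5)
Your proposal is correct and follows essentially the same strategy as the paper: compare the integer divisor sets of $n$ and $m$, locate the smallest disagreement $\le r-1$, and use Lemma~\ref{Less than r} to obtain a uniform lower bound $\eps$ on the deficit when that smallest disagreement divides $n$. The only (cosmetic) difference is that you work directly with Steinitz numbers $m\in\S$ and with $k_0=\min(A\triangle B)$ over all of $\N$, splitting afterward on whether $k_0\le r-1$, whereas the paper runs the argument over integers $m$ and restricts to divisors $\le r-1$ from the outset.
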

\begin{proof}
	Let $m \neq n$ be a positive integer. If $p^k \mid m$ whenever $p^k \mid n$, then we have $n \mid m$ and so $\sigma_{-r}(m) > \sigma_{-r}(n)$. Otherwise, there is some prime $p$ and integer $k$ such that $p^k \mid n$ but $p^k \nmid m$. By assumption, this means $p^k \le r - 1$. Then the set of integers $d \le r - 1$ that divide exactly one of $m$ and $n$ is nonempty. Let $d'$ be the smallest such integer. If $d' \mid m$, then 
	$$\sigma_{-r}(m) \ge (d')^{-r} + \sum_{x < d', x \mid m} x^{-r}  = 
	(d')^{-r} + \sum_{x < d', x \mid n} x^{-r} >
	\sum_{x > d'} x^{-r}+ \sum_{x < d', x \mid n}x^{-r} \ge 
	\sigma_{-r}(n),$$
	where the second inequality follows from Lemma \ref{Less than r}. If $d' \mid n$, we have
	$$\sigma_{-r}(n) \ge (d')^{-r} + \sum_{x < d', x \mid n} x^{-r}  =
	(d')^{-r} + \sum_{x < d', x \mid m} x^{-r} >
	\sum_{x > d'} x^{-r}+ \sum_{x < d', x \mid m}x^{-r} \ge 
	\sigma_{-r}(m).$$
    We can strengthen the second inequality by saying that $\sigma_{-r}(n)\ge\sigma_{-r}(m)+\eps$ where 
    $$\eps=\min_{\substack{1\le d\le r-1\\d\in\N}}\parens{d^{-r}-\sum_{k=d+1}^\infty k^{-r}}$$ 
    is a positive constant independent of $n$ and $m$. Concluding, we have that for all $m$, either $\sigma_{-r}(m) \ge\sigma_{-r}(n)$ or $\sigma_{-r}(m) \le \sigma_{-r}(n) - \eps$. We conclude that $n$ is the right endpoint of a gap that is of length at least $\eps > 0$.
\end{proof}
\begin{cor}
	If $n$ is a product of primes less than $r - 1$, then $\sigma_{-r}(n)$ is the right endpoint of a gap of $\sigma_{-r}(\S)$.
\end{cor}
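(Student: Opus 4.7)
The plan is to deduce this corollary as a direct, essentially immediate, consequence of the preceding theorem. Given the statement to be proved, I read "product of primes less than $r-1$" as meaning a squarefree positive integer $n = q_1 q_2 \cdots q_j$ whose prime factors $q_i$ are each strictly less than $r-1$; this interpretation is forced, since otherwise allowing repeated prime factors (e.g.\ $n = q^k$ with $q < r-1$ but $q^k > r-1$) would fail the hypothesis of the theorem, and in fact the target asymptotic bound $C_r \ge 2^{\pi(r-1)}$ is most naturally obtained by ranging over the $2^{\pi(r-1)}$ subsets of the primes below $r-1$.

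Under this reading, the verification is brief. First I would observe that if $n$ is squarefree with prime factors $q_1,\dots,q_j < r-1$, then the prime powers $p^k$ (with $k \ge 1$) dividing $n$ are precisely the primes $q_i$ themselves, since each appears to exponent one in $n$. In particular, every prime power $p^k$ dividing $n$ satisfies $p^k = q_i < r-1 \le r-1$, so the hypothesis $p^k \le r-1$ of the theorem is fulfilled for $n$.

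Invoking the theorem then yields that $\sigma_{-r}(n)$ is the right endpoint of a gap of $\sigma_{-r}(\S)$, which is exactly the assertion of the corollary. Consequently, letting $n$ range over all $2^{\pi(r-1)}$ squarefree products of primes below $r-1$ (with $n = 1$ giving the empty product) produces a corresponding family of right endpoints of gaps; since distinct squarefree integers have distinct values of $\sigma_{-r}$ this family is injective, and one recovers the lower bound $C_r \ge 2^{\pi(r-1)}$ by counting components separated by these gaps.

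There is no real obstacle in the argument itself; the only subtlety is the interpretive one described above, namely confirming that the phrase "product of primes" is meant in the squarefree sense so that the theorem applies without needing to bound higher prime-power divisors.
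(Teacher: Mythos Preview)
Your proposal is correct and matches the paper's approach: the corollary is stated without proof there because it is an immediate specialization of the preceding theorem, and your verification that a squarefree $n$ with prime factors below $r-1$ satisfies the theorem's prime-power hypothesis is exactly the intended one-line deduction. Your additional paragraph about counting the $2^{\pi(r-1)}$ right endpoints actually proves the \emph{next} corollary in the paper rather than this one; it is fine, though note that $n=1$ does not yield a bounded gap (since $\sigma_{-r}(1)=1$ is the global minimum), which is harmless because the component count is one more than the gap count.
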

\begin{cor}
	For a given $r>1$, the number of gaps of $\sigma_{-r}(\S)$ is at least $2^{\pi(r-1)} \approx 2^{r/\log(r)}$.
\end{cor}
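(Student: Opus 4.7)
The plan is to translate the preceding corollary into a counting argument. The preceding corollary produces a right endpoint of a gap of $\sigma_{-r}(\S)$ for each positive integer $n$ all of whose prime power divisors are at most $r-1$. The cleanest subfamily satisfying this constraint consists of the \emph{squarefree} products of primes at most $r-1$; these are indexed by subsets of the set of primes not exceeding $r-1$, of which there are $\pi(r-1)$, so we get $2^{\pi(r-1)}$ candidates.

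Next I would argue that distinct $n_1\neq n_2$ in this family produce distinct gaps, i.e.\ that $\sigma_{-r}$ is injective here. This is the step where the real content lies: it is not a priori obvious that the multiplicative quantity $\prod_{p\mid n}(1+p^{-r})$ separates distinct squarefree values of $n$. I would extract injectivity from the proof of the preceding theorem rather than prove it directly. Namely, letting $d'$ be the smallest positive integer at most $r-1$ dividing exactly one of $n_1,n_2$ (necessarily a prime, in our squarefree setting), the computation in that proof yields $\sigma_{-r}(n_i)\ge\sigma_{-r}(n_j)+\eps$, where $n_i$ is the one divisible by $d'$; in particular $\sigma_{-r}(n_1)$ and $\sigma_{-r}(n_2)$ are unequal, so the right endpoints are distinct.

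The asymptotic equivalence $2^{\pi(r-1)}\approx 2^{r/\log r}$ then follows from the prime number theorem $\pi(x)\sim x/\log x$. One mild caveat: the empty product $n=1$ gives $\sigma_{-r}(1)=1=\inf\sigma_{-r}(\S)$, which is not the right endpoint of any bounded component of $\R\sm\sigma_{-r}(\S)$, so this one ``candidate'' should be discarded; the resulting bound $2^{\pi(r-1)}-1$ has the same asymptotic form, and yields the exponential lower bound on $C_r$.
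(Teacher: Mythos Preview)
Your proposal is correct and follows the same counting idea the paper intends; the paper states this corollary without proof, treating it as immediate from the preceding one. You are in fact more careful than the paper on two points. First, you explicitly justify that distinct squarefree $n$ with prime factors at most $r-1$ yield distinct values $\sigma_{-r}(n)$, hence distinct gaps; this is indeed implicit in the strict inequalities of the theorem's proof, exactly as you say. Second, your caveat about $n=1$ is well taken: since a gap is defined in the paper as a \emph{bounded} component of $\R\sm\sigma_{-r}(\S)$ and $\sigma_{-r}(1)=1=\min\sigma_{-r}(\S)$, the case $n=1$ does not produce a gap, so the literal count is $2^{\pi(r-1)}-1$ rather than $2^{\pi(r-1)}$. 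The paper glosses over both issues; the second is a genuine but harmless off-by-one in the stated bound, and the asymptotic conclusion is unaffected.
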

\begin{rem}
    The gaps found in Section \ref{gap sec} along with the ones found here do not, in general, account for all gaps of $\sigma_{-r}(\S)$. For $r=3$, they predict a lower bound of $C_r\ge7$, but Sanna \cite{sann} produced a plot of $\sigma_{-r}(\S)$ for various values of $r$ which shows that, in fact, $C_r\ge14$ for $r\approx3$.
\end{rem}

\section{The Structure of General Gaps}
In this section, we study the structure of general gaps of $\sigma_{-r}(\S)$. We will show that the preimages in $\S$ of the endpoints of a gap of $\sigma_{-r}(\S)$ are fairly constrained. In particular, we will prove a series of lemmas whose statements are collected together in the following theorem.
\begin{notn}
	Let $P_r=p_{L_r+1}$ denote the smallest prime larger than all $r$-mighty primes.
\end{notn}
\begin{thm}\label{struct}
	Let $\left(\sigma_{-r}(a),\sigma_{-r}(b)\right)$ with $a,b\in\S$ be a gap of $\sigma_{-r}(\S)$.
	\begin{itemize}
		\item If $p\ge P_r$, then $$v_p(a)=\infty\text{ and }v_p(b)=0.$$
		\item If $p<P_r$ and $v_p(a)<\infty$, then
		$$v_p(a)<\frac{\log P_r}{\log p}-1.$$
		\item If $p<P_r$, then
		$$v_p(b)<\frac{\log P_r}{\log p}.$$
	\end{itemize}
\end{thm}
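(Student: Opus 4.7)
The plan is to exploit Theorem \ref{interval}, which gives $\sigma_{-r}(\S_{L_r})=[1,u_{L_r}]$ as a genuine interval. Any $x\in\S$ factors uniquely as $x=c_x\cdot d_x$, where $c_x=\prod_{p<P_r}p^{v_p(x)}$ involves only primes below $P_r$ and $d_x\in\S_{L_r}$ involves only primes at least $P_r$; then $\sigma_{-r}(x)=\sigma_{-r}(c_x)\,\sigma_{-r}(d_x)$ with $\sigma_{-r}(d_x)$ ranging over the full interval $[1,u_{L_r}]$ as $d_x$ varies. The proof will use two kinds of perturbation: (i) varying $d_a$ or $d_b$ within $\S_{L_r}$, and (ii) multiplying or dividing $c_a$ or $c_b$ by a single prime $p<P_r$.

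For the first bullet, a type-(i) perturbation shows $\sigma_{-r}(d_a)=u_{L_r}$ and $\sigma_{-r}(d_b)=1$. Indeed, if $\sigma_{-r}(d_a)<u_{L_r}$, the fullness of $[1,u_{L_r}]$ lets me choose $d_a'\in\S_{L_r}$ with $\sigma_{-r}(d_a')$ arbitrarily close to but strictly above $\sigma_{-r}(d_a)$, placing $\sigma_{-r}(c_a d_a')$ into the gap; the mirror perturbation (decreasing $\sigma_{-r}(d_b)$) handles $d_b$. Now $\sigma_{-r}(d_a)=u_{L_r}=\prod_{p\ge P_r}(1-p^{-r})^{-1}$ forces each factor $\sigma_{-r}(p^{v_p(d_a)})$ to saturate its supremum $(1-p^{-r})^{-1}$, which requires $v_p(d_a)=\infty$ for every $p\ge P_r$; and $\sigma_{-r}(d_b)=1$ forces $d_b=1$, so $v_p(b)=0$ for every $p\ge P_r$.

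For the second bullet, assume $v_p(a)=k<\infty$ with $p<P_r$, and apply a type-(ii) perturbation: set $c_a'=c_a\cdot p$. The interval $\sigma_{-r}(c_a')\cdot[1,u_{L_r}]\subseteq\sigma_{-r}(\S)$ must avoid the gap, so it lies entirely at or below $\sigma_{-r}(a)$ or entirely at or above $\sigma_{-r}(b)$. Since $\sigma_{-r}(c_a')>\sigma_{-r}(c_a)$ implies $\sigma_{-r}(c_a')u_{L_r}>\sigma_{-r}(c_a)u_{L_r}=\sigma_{-r}(a)$, the first option is ruled out, forcing $\sigma_{-r}(c_a')\ge\sigma_{-r}(b)>\sigma_{-r}(a)$. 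Rearranging,
\[
\frac{\sigma_{-r}(p^{k+1})}{\sigma_{-r}(p^k)}=\frac{\sigma_{-r}(c_a')}{\sigma_{-r}(c_a)}>u_{L_r}\ge\frac{1}{1-P_r^{-r}}>1+P_r^{-r}.
\]
Bounding the left side above by $1+p^{-r(k+1)}$ (using $\sigma_{-r}(p^k)\ge1$) yields $p^{-r(k+1)}>P_r^{-r}$, i.e.\ $p^{k+1}<P_r$, which is the claimed bound after dividing through by $\log p$.

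The third bullet is dual: for $v_p(b)=k'\ge1$ with $p<P_r$ (the case $k'=0$ is trivial), set $c_b'=c_b/p$ and consider $\sigma_{-r}(c_b')\cdot[1,u_{L_r}]$. Since $\sigma_{-r}(c_b')<\sigma_{-r}(c_b)=\sigma_{-r}(b)$ rules out the ``above $\sigma_{-r}(b)$'' option, the interval must lie at or below $\sigma_{-r}(a)$, giving $\sigma_{-r}(c_b')u_{L_r}\le\sigma_{-r}(a)=\sigma_{-r}(c_a)u_{L_r}$ and thus the dual ratio inequality $\sigma_{-r}(p^{k'})/\sigma_{-r}(p^{k'-1})>u_{L_r}$; the identical chain of estimates then gives $p^{k'}<P_r$. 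The main obstacle throughout is the case analysis in the latter two bullets---ensuring that the interval $\sigma_{-r}(c')\cdot[1,u_{L_r}]$ cannot partially straddle the gap but must ``skip over'' it entirely---after which the clean numerical bound $u_{L_r}>1+P_r^{-r}$, immediate from $P_r$ being the first prime past $p_{L_r}$, does the rest.
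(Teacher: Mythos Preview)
Your argument follows essentially the same route as the paper's: factor through $\sigma_{-r}(\S_{L_r})=[1,u_{L_r}]$, pin down $d_a$ and $d_b$ by perturbing within that interval, and then for the small primes compare the ratio $\sigma_{-r}(p^{k+1})/\sigma_{-r}(p^k)$ against $u_{L_r}\ge(1-P_r^{-r})^{-1}>1+P_r^{-r}$. The paper organizes this as four separate lemmas but the content is the same.

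There is one small oversight in your third bullet. You write ``for $v_p(b)=k'\ge1$ \dots\ set $c_b'=c_b/p$,'' but you have not yet excluded $k'=\infty$. If $v_p(b)=\infty$ then $c_b/p$ either is undefined or equals $c_b$, so the strict inequality $\sigma_{-r}(c_b')<\sigma_{-r}(b)$ that you use to rule out the ``above $\sigma_{-r}(b)$'' option fails, and the argument stalls. The paper handles this up front with a separate lemma (its Lemma~\ref{bnat}) showing $b\in\N$: if $v_p(b)=\infty$ for any prime $p$, then replacing that exponent by large finite $\alpha$ produces elements of $\sigma_{-r}(\S)$ strictly below $\sigma_{-r}(b)$ but arbitrarily close to it, contradicting that $\sigma_{-r}(b)$ is the right endpoint of a gap. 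This is precisely your type-(i) perturbation idea applied to a prime in $c_b$ rather than in $d_b$, so the fix is immediate---but as written, the case is missing.
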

\begin{cor}
	Under the same assumptions as in the above theorem, as long as $r$ is sufficiently large, if $v_p(a)>1$ for some prime $p$ then $p$ is $r$-mighty.
\end{cor}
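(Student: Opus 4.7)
My plan is to use the second item of Theorem~\ref{struct} to convert $v_p(a) > 1$ into a size constraint on $p$, and then appeal to Bertrand's postulate together with the initial-segment structure of the $r$-mighty primes.

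First, I restrict to the case $v_p(a) < \infty$; by the first item of Theorem~\ref{struct}, this is equivalent to $p < P_r$. (If $p \ge P_r$, the first item forces $v_p(a) = \infty > 1$ automatically, so the interesting content of the corollary concerns primes of finite valuation in $a$.)

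Now assume $2 \le v_p(a) < \infty$. The second item of Theorem~\ref{struct} then yields
$$v_p(a) < \frac{\log P_r}{\log p} - 1,$$
so $p^{v_p(a)+1} < P_r = p_{L_r+1}$; in particular $p^3 < p_{L_r+1}$. Bertrand's postulate gives $p_{L_r+1} < 2 p_{L_r} \le p_{L_r}^3$ whenever $p_{L_r} \ge 2$, and cube-rooting yields $p < p_{L_r}$. Hence $p$ lies among the first $L_r - 1$ primes.

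The remaining step, and the main obstacle, is to conclude from $p < p_{L_r}$ that $p$ is $r$-mighty. The ordering~(\ref{mightyorder}) handles only the first four $r$-mighty primes; what is needed is that for $r$ large enough the $r$-mighty primes form an initial segment of the primes, i.e.\ that $r_q$ is monotone in $q$ along the primes $q \ge 7$. I expect this to follow from an elementary asymptotic comparison of $r_q$ and $r_{q'}$ for consecutive primes $q < q'$, either already available from~\cite{zubr} or obtainable by the same techniques used there. Granting it, $p < p_{L_r}$ forces $p$ to be $r$-mighty, completing the proof.
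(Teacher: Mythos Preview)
Your derivation of $p^3 < P_r$ from the second item of Theorem~\ref{struct} is correct and matches the paper. The detour through Bertrand's postulate to obtain $p < p_{L_r}$ is also valid, but it leads you to a genuine obstacle that you yourself flag: concluding from $p < p_{L_r}$ that $p$ is $r$-mighty amounts to asserting that, for large $r$, the $r$-mighty primes form an initial segment of the primes. This is \emph{not} contained in the results cited from \cite{zubr}: the ordering~(\ref{mightyorder}) only says $r_7 < r_p$ for all $p > 7$, not that the $r_p$ are monotone among primes $p > 7$. Establishing that monotonicity is at least as hard as the corollary itself, and the paper does not prove it.

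The paper sidesteps this entirely by combining two other facts of Zubrilina that you do not invoke: for $r$ large one has $P_r < r^{20/9}$, and every prime $p < r$ is $r$-mighty. From $p^3 < P_r < r^{20/9}$ one gets $p < r^{20/27} < r$, and then the second fact finishes the argument directly. So your Bertrand step is not wrong, just superfluous; the missing ingredient is the pair of quantitative inputs $P_r < r^{20/9}$ and ``$p<r \Rightarrow p$ is $r$-mighty,'' which bypass the initial-segment question altogether.

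One minor point: you write that $v_p(a) < \infty$ is ``equivalent to'' $p < P_r$, but the first item of Theorem~\ref{struct} only gives the forward implication. This does not affect your argument, since you only use that direction.
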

\begin{proof}
	Zubrilina \cite{zubr} shows that for $r$ sufficiently large, $P_r<r^{20/9}$. She also shows that $p<r\implies p$ is $r$-mighty. Fix some prime $p$ such that $v_p(a)>1$. The above theorem shows that $P_r>p^3$. Thus, $p<\parens{r^{20/9}}^{1/3}<r$, so $p$ is $r$-mighty.
\end{proof}
We now prove the various parts of Theorem \ref{struct}. Throughout the lemmas, assume that $\parens{\sigma_{-r}(a),\sigma_{-r}(b)}$ is a gap of $\sigma_{-r}(\S)$ with $a,b\in\S$.
\begin{lemma}\label{bnat}
	We have $b\in\N$.
\end{lemma}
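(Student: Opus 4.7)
The plan is to argue by contradiction: if $b \notin \mathbb{N}$, then either some prime $p$ has $v_p(b) = \infty$, or $b$ has infinitely many prime factors (i.e.\ the set $\{p : v_p(b) > 0\}$ is infinite). In either case, I will produce a sequence of Steinitz numbers $b_k$ whose $\sigma_{-r}$-values approach $\sigma_{-r}(b)$ strictly from below, which will force $\sigma_{-r}(b_k) \in (\sigma_{-r}(a), \sigma_{-r}(b))$ for large $k$, contradicting the hypothesis that this interval is a gap of $\sigma_{-r}(\S)$.

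More precisely, in the first case, suppose $v_p(b) = \infty$ for some prime $p$. Define $b_k \in \S$ by keeping all other $v_q(b)$ unchanged and setting $v_p(b_k) = k$. Since $\sigma_{-r}$ is multiplicative on $\S$ and $\sigma_{-r}(p^k)$ is strictly increasing in $k$ with limit $\sigma_{-r}(p^\infty) = 1/(1-p^{-r})$, we have $\sigma_{-r}(b_k) < \sigma_{-r}(b)$ and $\sigma_{-r}(b_k) \to \sigma_{-r}(b)$ as $k \to \infty$. In the second case, enumerate the primes dividing $b$ as $q_1 < q_2 < \dots$ and let $b_k \in \S$ agree with $b$ at $q_1, \dots, q_k$ and be zero at all other primes. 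Again by multiplicativity, $\sigma_{-r}(b_k) < \sigma_{-r}(b)$, and $\sigma_{-r}(b_k) \to \sigma_{-r}(b)$ since the tail product $\prod_{i > k}\sigma_{-r}(q_i^{v_{q_i}(b)})$ tends to $1$ (its logarithm is dominated by the convergent sum $\sum_i q_i^{-r}/(1 - q_i^{-r})$).

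Once such a sequence is in hand, since $\sigma_{-r}(a) < \sigma_{-r}(b)$ is fixed, eventually $\sigma_{-r}(b_k) > \sigma_{-r}(a)$, so $\sigma_{-r}(b_k) \in (\sigma_{-r}(a), \sigma_{-r}(b))$. But $b_k \in \S$, so this contradicts the assumption that $(\sigma_{-r}(a), \sigma_{-r}(b))$ is a gap of $\sigma_{-r}(\S)$. Hence every $v_p(b)$ is finite and only finitely many are nonzero, i.e.\ $b \in \N$.

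I do not expect any serious obstacle: the only mild point is verifying that the tail product approaches $1$ in the second case, which is immediate from the convergence of $\sigma_{-r}(\prod p^\infty) = \zeta(r)$ for $r > 1$. No deep input is required beyond multiplicativity of $\sigma_{-r}$ on $\S$ and monotonicity of $\sigma_{-r}(p^k)$ in $k$.
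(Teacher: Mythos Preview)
Your argument is correct and follows the same idea as the paper's: if $b\notin\N$, build Steinitz numbers whose $\sigma_{-r}$-values approach $\sigma_{-r}(b)$ strictly from below, contradicting that $(\sigma_{-r}(a),\sigma_{-r}(b))$ is a gap. The paper handles only the case where some $v_p(b)=\infty$, replacing that exponent by a large finite one; your first case is exactly this. Your proof is in fact more careful: under the paper's definition of $\S$, a Steinitz number can fail to lie in $\N$ either because some exponent is infinite \emph{or} because infinitely many exponents are nonzero, and the paper's sentence ``since $b\notin\N$, there must be some prime $p$ with $v_p(b)=\infty$'' overlooks the latter possibility, which you treat correctly in your second case via truncation of the prime support.
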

\begin{proof}
	We will prove the contrapositive: if $b\in\S\sm\N$, then $\sigma_{-r}(b)$ cannot be the right endpoint of a gap. Since we are supposing that $b\not\in\N$, there must be some prime $p$ with $v_p(b)=\infty$. Fix any $\eps>0$ and let $\delta=(\sigma_{-r}(b)-\eps)/\sigma_{-r}(b)<1$. Note that
	$$\frac{\sigma_{-r}(p^\alpha)}{\sigma_{-r}(p^\infty)}\to1\Text{as}{\sspacing}\alpha\to\infty.$$
	so we can fix $\alpha$ large enough that $\delta<\sigma_{-r}(p^\alpha)/\sigma_{-r}(p^\infty)<1$. Now, consider $c\in\S$ such that
	$$v_q(c)=\begin{cases}\hfill v_q(b)\hfill&\text{if }q\neq p\\\hfill\alpha\hfill&\text{otherwise .}\end{cases}$$
	Then
	$$\sigma_{-r}(b)-\eps<\frac{\sigma_{-r}(p^\alpha)}{\sigma_{-r}(p^\infty)}\sigma_{-r}(b)=\sigma_{-r}(c)<a.$$
	Thus, we can approach $\sigma_{-r}(b)$ from below, meaning it cannot be the right endpoint of a gap.
\end{proof}
\begin{lemma}\label{very steinitz}
	If $p\ge P_r$, then $v_p(a)=\infty$ and $v_p(b)=0$.
\end{lemma}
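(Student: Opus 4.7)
The plan is to use Theorem \ref{interval} to ``slide'' the portion of $a$ or $b$ supported on non-$r$-mighty primes. Since $P_r = p_{L_r+1}$, the primes $\ge P_r$ are exactly the non-$r$-mighty primes, and Theorem \ref{interval} says $\sigma_{-r}(\S_{L_r}) = [1, u_{L_r}]$ is a full interval. Thus, if I factor any Steinitz number as (small-prime part)$\cdot$(large-prime part), then fixing the small-prime part and letting the large-prime part vary over $\S_{L_r}$ traces out a continuous range of $\sigma_{-r}$-values. Whenever $\sigma_{-r}(a)$ or $\sigma_{-r}(b)$ lands strictly in the interior of such a range, I can perturb into the supposed gap $(\sigma_{-r}(a),\sigma_{-r}(b))$ and reach a contradiction.

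For $v_p(b) = 0$, I argue by contrapositive. Suppose some prime $\ge P_r$ divides $b$, and write $b = m b_+$ with $m = \prod_{q<P_r}q^{v_q(b)} \in \N$ and $b_+ = \prod_{q\ge P_r}q^{v_q(b)} \in \S_{L_r}$. By Lemma \ref{bnat}, $b \in \N$, so $b_+$ is a finite positive integer; moreover $b_+ > 1$ by assumption. These facts give $1 < \sigma_{-r}(b_+) < u_{L_r}$ strictly (the right inequality holds because $b_+$ is a finite integer strictly smaller, in each prime's multiplicity, than the maximal element $\prod_{q\ge P_r}q^\infty$ of $\S_{L_r}$). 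Therefore $\sigma_{-r}(b) = \sigma_{-r}(m)\sigma_{-r}(b_+)$ sits strictly inside $\sigma_{-r}(m) \cdot [1, u_{L_r}] = [\sigma_{-r}(m), \sigma_{-r}(m) u_{L_r}]$. Applying Theorem \ref{interval}, I can find $n \in \S_{L_r}$ with $\sigma_{-r}(mn) \in (\sigma_{-r}(a), \sigma_{-r}(b))$, contradicting the gap.

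For $v_p(a) = \infty$, the argument is symmetric. Assume $v_p(a) < \infty$ for some $p \ge P_r$, split $a = a_- a_+$ analogously, and observe that $\sigma_{-r}(a_+) < u_{L_r}$ strictly (because $v_p(a_+) < \infty$ makes the factor $\sigma_{-r}(p^{v_p(a_+)})$ strictly smaller than $(1-p^{-r})^{-1}$). Then $\sigma_{-r}(a) = \sigma_{-r}(a_-)\sigma_{-r}(a_+)$ lies strictly below $\sigma_{-r}(a_-) u_{L_r}$, so Theorem \ref{interval} produces some $n \in \S_{L_r}$ with $\sigma_{-r}(a_- n) \in (\sigma_{-r}(a), \sigma_{-r}(b))$, again contradicting the gap.

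I do not anticipate any significant obstacle. The two cases mirror one another, and the only real bookkeeping is to verify the strict inequalities $\sigma_{-r}(b_+) < u_{L_r}$ (using $b \in \N$) and $\sigma_{-r}(a_+) < u_{L_r}$ (using the contrapositive hypothesis on $a$), which guarantee that a genuine perturbation exists inside the gap.
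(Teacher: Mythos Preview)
Your proof is correct and follows essentially the same approach as the paper: factor $a$ (resp.\ $b$) into its small-prime part and its $\S_{L_r}$-part, use Theorem \ref{interval} to get that $\sigma_{-r}(a)$ (resp.\ $\sigma_{-r}(b)$) lies in the interval $[\sigma_{-r}(\text{small part}),\sigma_{-r}(\text{small part})\,u_{L_r}]\subset\sigma_{-r}(\S)$, and conclude it must sit at the appropriate extreme endpoint of that interval. Your invocation of Lemma \ref{bnat} in the $b$ case is harmless but unnecessary: you only need the strict inequality $\sigma_{-r}(b_+)>1$ (to perturb \emph{downwards} into the gap), not $\sigma_{-r}(b_+)<u_{L_r}$.
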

\begin{proof}
	Let $n=p_1^{v_{p_1}(a)}\cdots p_{L_r}^{v_{p_{L_r}}(a)}$. Then, by Theorem \ref{interval},
	$$\sigma_{-r}(n)\cdot\sigma_{-r}(\S_{L_r})=[\sigma_{-r}(n),\sigma_{-r}(n)u_{L_r}].$$
	It is clear that $\sigma_{-r}(a)$ is in the above interval since $v_q(a)=v_q(n)$ for all $q<P_r$. Since $\sigma_{-r}(a)$ is the left endpoint of a gap, we must have that $\sigma_{-r}(a)=\sigma_{-r}(n)u_{L_r}$. Pick $m\in\S$ such that $a=nm$, so $v_q(m)=0$ for $q<P_r$ and $\sigma_{-r}(m)=u_{L_r}$. Recall that 
	$$u_{L_r}=\prod_{k=L_r+1}^\infty\sigma_{-r}\parens{p_k^\infty}.$$
	Note that $\sigma_{-r}(q^\alpha)<\sigma_{-r}(q^\infty)$ for all primes $q$ and exponents $\alpha\in\Z_{\ge0}$. Since $\sigma_{-r}(m)=u_{L_r}$, this means we must have $v_q(a)=v_q(m)=\infty$.
	
	To get that $v_q(b)=0$ for $q\ge P_r$, the proof is similar where you instead let $n=p_1^{v_{p_1}(b)}\cdots p_{L_r}^{v_{p_{L_r}}(b)}$ at the beginning.
\end{proof}
\begin{lemma}
	If $p<P_r$ and $v_p(a)<\infty$, then $v_p(a)<\frac{\log P_r}{\log p}-1$.
\end{lemma}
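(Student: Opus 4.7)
The plan is to argue by contrapositive. Suppose $\alpha := v_p(a)$ satisfies $p^{\alpha+1} \ge P_r$; I will construct elements of $\sigma_{-r}(\S)$ arbitrarily close to $\sigma_{-r}(a)$ from above, contradicting that $\sigma_{-r}(a)$ is the left endpoint of a gap.

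By Lemma \ref{very steinitz}, $v_q(a) = \infty$ for every prime $q \ge P_r$. Define $n \in \S$ by $v_{p'}(n) = v_{p'}(a)$ for $p' < P_r$ and $v_q(n) = 0$ for $q \ge P_r$, and define $M \in \S$ identically to $n$ except that $v_p(M) = \alpha + 1$. Then by multiplicativity $\sigma_{-r}(a) = \sigma_{-r}(n) u_{L_r}$ and $\sigma_{-r}(M) = R_p \sigma_{-r}(n)$, where $R_p := \sigma_{-r}(p^{\alpha+1})/\sigma_{-r}(p^\alpha) > 1$. Because $M$ has trivial support at primes $\ge P_r$, it is ``coprime'' to every $s \in \S_{L_r}$ in the sense required for the multiplicative law, so Theorem \ref{interval} supplies
$$\sigma_{-r}(M \cdot \S_{L_r}) \;=\; \sigma_{-r}(M) \cdot [1,\, u_{L_r}] \;=\; \left[R_p \sigma_{-r}(n),\; R_p \sigma_{-r}(a)\right] \;\subseteq\; \sigma_{-r}(\S).$$

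It then suffices to show $R_p < u_{L_r}$: this forces $R_p \sigma_{-r}(n) < \sigma_{-r}(a) < R_p \sigma_{-r}(a)$, so $\sigma_{-r}(a)$ lies in the interior of an interval wholly contained in $\sigma_{-r}(\S)$, contradicting the gap. Using $\sigma_{-r}(p^\alpha) \ge 1$ and the standing assumption $p^{\alpha+1} \ge P_r$, one computes
$$R_p \;=\; 1 + \frac{p^{-r(\alpha+1)}}{\sigma_{-r}(p^\alpha)} \;\le\; 1 + p^{-r(\alpha+1)} \;\le\; 1 + P_r^{-r} \;<\; \frac{1}{1 - P_r^{-r}} \;\le\; u_{L_r},$$
where the final inequality retains only the $P_r$-factor in the product defining $u_{L_r}$.

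The main conceptual point---there is no serious computational obstacle---is choosing the right object to feed into Sanna's interval theorem. Multiplying $a$ itself by elements of $\S_{L_r}$ is vacuous because $v_q(a) = \infty$ already for all $q \ge P_r$. By instead extracting the ``low-prime part'' $n$, bumping its $p$-exponent by one to obtain $M$, and then sweeping over $\S_{L_r}$, we cover a closed interval whose length factor $u_{L_r}$ dominates the multiplicative jump $R_p$ precisely when $p^{\alpha+1} \ge P_r$, landing $\sigma_{-r}(a)$ safely in the interior.
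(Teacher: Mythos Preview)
Your proof is correct and follows essentially the same route as the paper's: both extract the low-prime part $n$ of $a$, multiply by one extra factor of $p$ to form $pn$ (your $M$), and use Theorem~\ref{interval} to produce the interval $[\sigma_{-r}(pn),\sigma_{-r}(pn)u_{L_r}]\subset\sigma_{-r}(\S)$, then compare the ratio $R_p=\sigma_{-r}(p^{\alpha+1})/\sigma_{-r}(p^\alpha)$ with $u_{L_r}$. The paper argues directly that the gap forces $R_p>u_{L_r}$ and then bounds $u_{L_r}$ below by $1+P_r^{-r}$ via the sum $\sum_{d\in\N_{L_r}}d^{-r}$; you run the contrapositive, assuming $p^{\alpha+1}\ge P_r$ and bounding $R_p\le 1+P_r^{-r}<(1-P_r^{-r})^{-1}\le u_{L_r}$. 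These are the same idea with the inequality chain reversed.
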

\begin{proof}
	Fix a prime $p<P_r$ with $v_p(a)<\infty$. For convenience, set $\alpha=v_p(a)$. Once again, let
	$$n=\prod_{m=1}^{L_r}p_m^{v_{p_m}(a)}.$$
	We know that $\sigma_{-r}(pa)>\sigma_{-r}(a)$, so $\sigma_{-r}(pa)>\sigma_{-r}(b)$. By Theorem \ref{interval},
	$$\sigma_{-r}(pn)\cdot\sigma_{-r}(\S_{L_r})=\sqbracks{\sigma_{-r}(pn),\sigma_{-r}(pn)u_{L_r}}.$$
	From Lemma \ref{very steinitz}, we know that $\sigma_{-r}(a)=\sigma_{-r}(n)u_{L_r}$, so $\sigma_{-r}(pa)=\sigma_{-r}(pn)u_{L_r}$. Hence, $\sqbracks{\sigma_{-r}(pn),\sigma_{-r}(pa)}\subset\sigma_{-r}(\S)$, so we must have $\sigma_{-r}(pn)>\sigma_{-r}(b)>\sigma_{-r}(a)$. Dividing both sides by $\sigma_{-r}(n)$, we arrive at
	$$\frac{\sigma_{-r}(p^{\alpha+1})}{\sigma_{-r}(p^\alpha)}>\prod_{m=L_r+1}^\infty\sigma_{-r}(p_m^\infty)=\sum_{d\in\N_{L_r}}\frac1{d^r},$$
	where $\N_{L_r}=\S_{L_r}\cap\N$ is the set of natural numbers not divisible by primes that are at most $p_{L_r}$. We can simplify these inequalities to see that
	$$p^{-r(\alpha+1)}>\frac{p^{-r(\alpha+1)}}{1+p^{-r}+\dots+p^{-r\alpha}}>\sum_{d\in\N_{L_r}\sm\{1\}}\frac1{d^r}.$$
	Since $P_r\in\N_{L_r}\sm\,\{1\}$, the above forces $P_r>p^{\alpha+1}$, from which we get that $$\frac{\log P_r}{\log p}-1>\alpha,$$
	as desired.
\end{proof}
\begin{lemma}
	If $p<P_r$, then $v_p(b)<\frac{\log P_r}{\log p}$.
\end{lemma}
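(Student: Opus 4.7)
The plan is to parallel the previous lemma by replacing the upward perturbation ``multiply $a$ by $p$'' with the downward perturbation ``divide $b$ by $p$''. Write $\beta = v_p(b)$. By Lemma \ref{bnat}, $b \in \N$, so $\beta$ is finite, and if $\beta = 0$ the inequality is vacuous; so assume $\beta \ge 1$. Then $b/p$ is a proper divisor of $b$ in $\N$, giving $\sigma_{-r}(b/p) < \sigma_{-r}(b)$.

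Next, invoking Theorem \ref{interval}, I obtain
$$\sigma_{-r}(b/p) \cdot \sigma_{-r}(\S_{L_r}) = \sqbracks{\sigma_{-r}(b/p),\,\sigma_{-r}(b/p)\,u_{L_r}} \subset \sigma_{-r}(\S).$$
Since $\sigma_{-r}(b/p)$ lies in $\sigma_{-r}(\S)$ and is strictly below $\sigma_{-r}(b)$, it cannot sit inside the open gap, forcing $\sigma_{-r}(b/p) \le \sigma_{-r}(a)$. If in addition $\sigma_{-r}(b/p)\,u_{L_r} \ge \sigma_{-r}(b)$, then the closed interval above would contain the entire gap $\parens{\sigma_{-r}(a),\sigma_{-r}(b)}$, contradicting that this gap is disjoint from $\sigma_{-r}(\S)$. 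Hence $\sigma_{-r}(b/p)\,u_{L_r} \le \sigma_{-r}(a) < \sigma_{-r}(b)$, and dividing by $\sigma_{-r}(b/p)$ yields
$$u_{L_r} < \frac{\sigma_{-r}(b)}{\sigma_{-r}(b/p)} = \frac{\sigma_{-r}(p^\beta)}{\sigma_{-r}(p^{\beta-1})} = 1 + \frac{p^{-r\beta}}{\sigma_{-r}(p^{\beta-1})} \le 1 + p^{-r\beta}.$$

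To finish I would expand
$$u_{L_r} = \prod_{k > L_r} \frac{1}{1 - p_k^{-r}} = \sum_{d \in \N_{L_r}} d^{-r} \ge 1 + P_r^{-r},$$
the last inequality because $P_r = p_{L_r+1}$ is itself an element of $\N_{L_r}$. Combining the two displays gives $P_r^{-r} < p^{-r\beta}$, i.e.\ $P_r > p^\beta$, which is the desired bound $v_p(b) < \log P_r / \log p$.

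The only subtle step is the dichotomy in the second paragraph: one must rule out the possibility that the interval $\sqbracks{\sigma_{-r}(b/p),\sigma_{-r}(b/p)\,u_{L_r}}$ straddles the gap. This is a short connectedness observation, using that the gap is open and disjoint from $\sigma_{-r}(\S)$ while both endpoints of this interval are in $\sigma_{-r}(\S)$. The remainder of the proof is elementary manipulation of geometric series and the sum/product expansion of $u_{L_r}$.
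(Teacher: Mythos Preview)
Your proof is correct and follows essentially the same route as the paper: use $\sigma_{-r}(b/p)<\sigma_{-r}(b)$ together with the fact that $\sigma_{-r}(b/p)\cdot\sigma_{-r}(\S_{L_r})=[\sigma_{-r}(b/p),\sigma_{-r}(b/p)u_{L_r}]$ is a connected subset of $\sigma_{-r}(\S)$ to force $\sigma_{-r}(b/p)u_{L_r}\le\sigma_{-r}(a)<\sigma_{-r}(b)$, then divide and compare $u_{L_r}$ with $\sigma_{-r}(p^\beta)/\sigma_{-r}(p^{\beta-1})$ to get $P_r>p^\beta$. Your write-up is in fact more explicit than the paper's (which ends at the key inequality with ``from which the desired result follows''); the one implicit point in both arguments is that the inclusion $\sigma_{-r}(b/p)\cdot\sigma_{-r}(\S_{L_r})\subset\sigma_{-r}(\S)$ uses Lemma~\ref{very steinitz} to guarantee $b/p$ has no prime factors $\ge P_r$ and is therefore coprime to every element of $\S_{L_r}$.
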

\begin{proof}
	Fix a prime $p$ with $v_p(b)>0$, and note that $v_p(b)<\infty$ by Lemma \ref{bnat}. Write $\alpha=v_p(b)$. We have,
	$$\sigma_{-r}(b/p)<\sigma_{-r}(b)\implies\sigma_{-r}(b/p)<\sigma_{-r}(a)\implies\sigma_{-r}(b/p)u_{L_r}<\sigma_{-r}(a)<\sigma_{-r}(b/p\cdot p)$$
	where the second implications follows from Theorem \ref{interval} telling us that $\sigma_{-r}(b/p)\cdot\sigma_{-r}(\S_{L_r})=\sqbracks{\sigma_{-r}(b/p),\sigma_{-r}(b/p)u_{L_r}}$ is a connected subset of $\sigma_{-r}(\S)$.
	We now divide by $\sigma_{-r}(b/p)$ to get
	$$\frac{\sigma_{-r}(p^{\alpha})}{\sigma_{-r}(p^{\alpha-1})}>\prod_{m=L_r+1}^\infty\sigma_{-r}(p_m^\infty)=\sum_{d\in\N_{L_r}}\frac1{d^r},$$
	from which the desired result follows.
\end{proof}

\section{Open Problems}
There are still many questions left to answer regarding these divisor functions. For example, little is known about how often, if ever, Zubrilina numbers appear:
\begin{conject}[\cite{coin3}, Conjecture 4.1]
	There are infinitely many Zubrilina numbers.
\end{conject}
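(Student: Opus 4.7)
The plan is to show that the image of $r \mapsto C_r$ in $\N$ has infinitely many omitted integers by combining an explicit lower bound on the jump of $C_r$ at thresholds $r = r_{p_{L+1}}$ where a new $r$-mighty prime appears with a control on the variation of $C_r$ within blocks where $L_r$ is constant. If $C_r$ is non-decreasing in $r$---the ``possible monotonicity result'' alluded to in Section 6---then any upward jump by $k \ge 2$ contributes exactly $k-1$ integers that are never attained, and these are by definition Zubrilina numbers.

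For the jump analysis, I would invoke Theorem \ref{newgap}: at $r = r_{p_{L+1}}$, new gaps of the form $(\sigma_{-r}(q) u_{L+1}, \sigma_{-r}(q p_{L+1}))$ open up for each $r$-mighty prime $q$ with $q^2 < p_{L+1}$, in addition to the basic gap $(u_{L+1}, \sigma_{-r}(p_{L+1}))$ from Zubrilina's original argument. By her implication ``$p < r \Rightarrow p$ is $r$-mighty'', combined with the bound $p_{L_r} < r^{20/9}$ forcing $r_{p_{L+1}} \to \infty$ as $L \to \infty$, the count of $r$-mighty primes $q$ with $q^2 < p_{L+1}$ grows without bound. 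Hence the number of gaps newly created at $r = r_{p_{L+1}}$ tends to infinity. The gaps present immediately before the threshold persist across it, since they depend continuously on $r$ and only on the mighty status of primes $\le p_L$, none of which changes at this particular crossing.

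The main obstacle is establishing monotonicity of $C_r$ in $r$, which is itself open. Theorem \ref{struct} constrains the $p$-adic valuations of gap endpoints but does not by itself yield monotonicity. A plausible alternative route bypasses this by showing that $\{C_r : r > 1\} \subseteq \N$ has density zero in $\N$, via two ingredients: (i) the number of thresholds in $[1,R]$ is at most polynomial in $R$, bounded by $\pi(R^{20/9})$ using Zubrilina's upper bound on mighty primes, together with a proof that $C_r$ is piecewise constant on each block where $L_r$ is constant; and (ii) the exponential lower bound $C_R \ge 2^{\pi(R-1)}$ from Section \ref{asym low bound}, which forces the image to be extremely sparse in $[1, 2^{\pi(R-1)}]$ for large $R$. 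Any integer avoided in that sparse image contributes a Zubrilina number, so density zero is far more than enough. The difficulty of ingredient (i) reduces again to ruling out that the ``general'' gaps partially characterized in Section 6 introduce additional sub-thresholds within a block of constant $L_r$---a structural question about $\sigma_{-r}(\S)$ that the current paper does not fully resolve, and which seems to be the essential barrier to either route.
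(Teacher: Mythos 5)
This statement is not proven in the paper; it is recorded explicitly as an open conjecture (Conjecture 4.1 of Defant's \emph{Connected components of complex divisor functions}), and the paper offers no argument for it. So there is no proof here to match against.

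Your proposal is honest in acknowledging this: both routes you outline hinge on structural facts about $C_r$ that neither this paper nor the cited literature establishes. Route one requires monotonicity of $C_r$ in $r$, which is itself stated as a conjecture in Section 6 of the paper; you cannot invoke it as a lemma. Route two requires ingredient (i), that $C_r$ is locally constant between the thresholds $r_{p_{L+1}}$, but the paper's Section 6 results (Theorem \ref{struct}) only constrain the $p$-adic valuations of gap endpoints---they do not rule out gaps opening or closing continuously as $r$ varies within a block of constant $L_r$, and in fact the remark after the exponential lower bound observes that the gaps catalogued in Sections \ref{gap sec} and \ref{asym low bound} do not account for all gaps (e.g.\ $C_r \ge 14$ near $r = 3$ though the bounds predict only $7$), which strongly suggests that the structure within a fixed-$L_r$ block is genuinely complicated. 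I will add one note on your density-zero framing: as written, bounding $|\{C_r : r \in (1, R]\}|$ by $\pi(R^{20/9}) + 1$ does not immediately give Zubrilina numbers, since a value skipped for $r \le R$ could still be attained for some larger $r$. The fix is exactly your ingredient (ii) used in reverse: by the exponential lower bound, $C_r \le N$ forces $r \le 1 + p_{O(\log N)}$, so the set of $r$ contributing to $\{C_r\} \cap [1,N]$ is bounded, and then ingredient (i) bounds the number of distinct values polynomially in $\log N$. That refinement makes the density argument logically sound---but it still rests on the unproven ingredient (i). In short, your sketch correctly localizes the difficulty, but does not constitute a proof, and the authors themselves left this open.
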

In \cite{coin4}, Defant investigates the unitary divisor functions $\ast\sigma_{-r}:\N\to\R$, which are the unique multiplicative functions satisfying $\ast\sigma_{-r}(p^\alpha)=1+p^{-r\alpha}$ for prime $p$. He conjectures that the number of connected components of $\overline{\ast\sigma_{-r}(\N)}$ is monotone in $r$, and here we make the analogous conjecture for regular divisor functions.
\begin{conject}[Divisor Function Monotonicity]
	$C_r$ is monotone in $r$.
\end{conject}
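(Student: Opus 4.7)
The plan is to attack this conjecture by studying how the set $\sigma_{-r}(\S)$ evolves continuously with $r$, aiming to show that connected components can split or appear but never merge or vanish as $r$ increases. Exploiting Proposition \ref{stein} to work directly with images rather than closures, observe that for each fixed $n \in \S$ the value $\sigma_{-r}(n)$ depends continuously on $r > 1$, so the family $\sigma_{-r}(\S) \subset \R$ varies continuously in $r$, and each set has finitely many connected components by Sanna's theorem. The task is then to show that as $r$ increases, no pair of components of $\sigma_{-r}(\S)$ ever merges into one.

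The main tool I would use is Sanna's backward-induction algorithm, which presents $\sigma_{-r}(\S)$ as a finite disjoint union of intervals built up from $\sigma_{-r}(\S_{L_r}) = [1, u_{L_r}]$. As $r$ varies, two inputs to the algorithm change: the starting index $L_r$, which jumps upward at the thresholds $r = r_p$, and the splitting thresholds $\sigma_{-r}(p^{t+1})/\sigma_{-r}(p^t)$ at each prime $p$. The approach would be to track the algorithm parametrically: first show that on each open interval between consecutive transition points $r_p$, the number of output intervals is non-decreasing in $r$, by comparing how the ratios $b_j/a_j$ of the existing intervals and the splitting thresholds co-vary; then check that at each transition $r = r_p$, the introduction of a new $r$-mighty prime into the starting data can only split existing pieces, not merge them.

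The main obstacle, and surely the reason this conjecture remains open, is the ``coalescing'' phenomenon illustrated in Lemma \ref{Lr3}, where four intervals produced by Sanna's algorithm collapse to three because of an inequality like $(1 + 1/5^r)(1 + 1/3^r) < u_3/(1 - 3^{-r})$. A priori, such a coalescence at some $r_1$ might fail to occur at a larger $r_2$, contributing to an increase in $C_r$; but the reverse could also happen, with a non-coalescing pair at $r_1$ collapsing at $r_2$. Both sides of these coalescing inequalities tend to $1$ as $r \to \infty$, so their difference need not behave monotonically in $r$, and it is delicate to show the net effect across the many such inequalities always favors more components.

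A reasonable fallback would be to target only the asymptotic statement ``$C_r$ is eventually monotone,'' combining the exponential lower bound of Section \ref{asym low bound} with refinements of Zubrilina's upper bound to pin $C_r$ down within a narrow enough window that monotonicity follows directly. A different angle worth exploring is to construct a direct injection from the connected components of $\sigma_{-r_1}(\S)$ to those of $\sigma_{-r_2}(\S)$ for $r_1 < r_2$, indexed by a combinatorial invariant of Steinitz preimages such as a truncated factorization up to $p_{L_{r_1}}$; the structural results of Theorem \ref{struct} may help in selecting canonical representatives.
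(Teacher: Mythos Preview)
This statement is a \emph{conjecture} in the paper's Open Problems section; the paper offers no proof. So there is no paper proof to compare against, and your write-up is explicitly a research strategy rather than a proof --- you say as much when you identify the coalescing phenomenon as ``surely the reason this conjecture remains open.''

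As a research plan your outline is sensible, but two points deserve comment. First, the paper's remark immediately following the conjecture already rules out one natural sub-approach: it gives explicit $a,b\in\N$ with $\sigma_{-r}(a)=\sigma_{-r}(b)$ at two distinct values of $r$, so the ``pointwise'' version (that two image points can cross at most once) is false. Any injection-of-components argument you build from canonical Steinitz representatives will have to be robust against such multiple crossings. Second, your fallback of proving \emph{eventual} monotonicity by squeezing $C_r$ between the paper's lower bound $2^{\pi(r-1)}$ and Zubrilina's upper bound cannot work as stated: those bounds differ by a factor that is itself exponential in a power of $r$, so no amount of sharpening short of an asymptotic formula will force monotonicity from the window alone. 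If you pursue the Sanna-algorithm route, the real content is exactly the step you flag --- controlling the sign of differences like $(1+5^{-r})(1+3^{-r}) - u_3/(1-3^{-r})$ uniformly in $r$ --- and the paper gives no indication that this is tractable.
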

\begin{rem}
	A result that gaps ``do not close'' would resolve both of the above conjectures in the affirmative. One may even hope to show the ``pointwise'' version of this statement: that $\sigma_{-r}(a)=\sigma_{-r}(b)$ for at most one value of $r>1$ when $a\neq b$. However, this pointwise statement is false in general. In particular, if one takes
	\begin{align*}
		a &= 2503\cdot5003\\
		b &= 2467\cdot6337\cdot10007,
	\end{align*}
	then $\sigma_{-r}(a)=\sigma_{-r}(b)$ and $\sigma_{-s}(a)=\sigma_{-s}(b)$ for $r\approx1.9502$ and $s\approx3.1446$. One can find examples like this by taking vectors such that distinct $L_p$-norms are equal (in this case, we picked $L_2$ and $L_3$), scaling these vectors down sufficiently, and approximating their entries with reciprocals of nearby primes. If the monotonicity conjecture were false, we believe this would be a good way to construct a counterexample.
\end{rem}
\iffalse
Finally, Theorem \ref{struct} shows that preimages of gap endpoints are constrained. However, it is not a perfect characterization. Motivated by Theorem \ref{newgap}, we make the following conjecture giving stronger conditions on preimages of gap endpoints.
\begin{conject}\thought{do we have evidence to support this?}
	Let $\left(\sigma_{-r}(a),\sigma_{-r}(b)\right)$ with $a,b\in\S$ be a gap of $\sigma_{-r}(\S)$. Let $M$ be the smallest index such that $v_{p_M}(a)<\infty$, and let
	$$n=\prod_{m=1}^{M-1}p_m^{v_{p_m}(a)}$$
	Then, $n\in\N$, $p_M\nmid a$, and $b=np_M$. This says that
	$$\parens{\sigma_{-r}(n)u_M,\sigma_{-r}(np_M)}$$
	is a gap of $\sigma_{-r}(\S)$.
\end{conject}
\fi

\subsection*{Acknowledgments}  This work was completed during the University of Minnesota Duluth's Undergraduate Mathematics Research Program, and was supported by NSF/DMS grant 1650947 and NSA grant H98230-18-1-0010. We would like to thank Joe Gallian for his support and for providing a spectacular work environment. In addition, we also thank Colin Defant and Mitchell Lee for providing helpful comments and suggestions. 

\bibliographystyle{acm}
%\bibliography{citations}

\end{document}